\documentclass[12pt,notitlepage,a4paper,reqno,ps]{amsart}
\usepackage{amsfonts}
\usepackage{eurosym}
\usepackage{amssymb}
\usepackage{amstext}
\usepackage{xcolor}
\usepackage{ulem}

\definecolor{lightgray}{rgb}{0.83, 0.83, 0.83}

\newcommand{\N}{\mathbb{N}}

\newcommand{\medint}{-\kern  -,375cm\int}

\newenvironment{michelarev}{\color{red}}{\color{black}}
\newcommand{\bmicr}{\begin{michelarev}}
\newcommand{\emicr}{\end{michelarev}}
\theoremstyle{plain}
\newtheorem{theorem}{Theorem}[section]

\newtheorem{lemma}[theorem]{Lemma}
\newtheorem{proposition}[theorem]{Proposition}
\theoremstyle{definition}

\theoremstyle{remark}

\newtheorem{example}[theorem]{Example}

\def\R{\mathbb{R}}

\numberwithin{equation}{section} \makeatletter

\renewcommand{\p@enumi}{\thesection.}
\makeatother  \allowdisplaybreaks
\email{michela.eleuteri@unimore.it}
 \email{stefania.perrotta@unimore.it}
 \email{giulia.treu@unipd.it}
\keywords{Regularity, local minimizers, local Lipschitz continuity, $p,q-$growth, general growth}
\subjclass[2020]{35J60, 35B65, 49N60, 35J70, 35B45}

\begin{document}
\title[Lipschitz continuity for integrals under unified general growth]{Local Lipschitz continuity for energy integrals under unified general growth of fast and slow type}
\author[M. Eleuteri  -- S. Perrotta -- G. Treu]{Michela
Eleuteri --  Stefania Perrotta -- Giulia Treu}
\address{Dipartimento di Scienze Fisiche, Informatiche e Matematiche,
Universit\`a degli Studi di Modena e Reggio Emilia, via Campi 213/b, 41125 -
Modena, Italy}
\address{Dipartimento di Matematica `Tullio Levi-Civita', Universit\`a di Padova, Via Trieste 63, 35121 - Padova, Italy}
\thanks{The authors are members of GNAMPA (Gruppo Nazionale per l'Analisi
Matematica, la Probabilit\`a e le loro Applicazioni) of INdAM (Istituto Nazionale di Alta Matematica). M. Eleuteri has been partially supported through the INdAM-GNAMPA Project 2026 (CUP E53C25002010001) ``Esistenza e regolarità per soluzioni di equazioni ellittiche e paraboliche anisotrope''. G. Treu has been partially supported through the project 2025DM1DOR25-00669 UniPd}

\begin{abstract}
We consider a class of anisotropic energy integrals including fast and slow growth under a unified approach, proving that the
local minimizers are locally Lipschitz continuous.
\end{abstract}

\maketitle


\section{Introduction and statement of the main results\label{Section Prologue}}
A fundamental problem in the Calculus of Variations concerns the regularity properties of minimizers of integral functionals
\begin{equation}
F(u)=\int_{\Omega }f(Du(x)) \,dx \,.
\label{energy integral}
\end{equation}
While the classical theory mainly deals with standard polynomial growth conditions, many models arising in nonlinear analysis and continuum mechanics require substantially more general growth behaviors.
\\
A major breakthrough in the regularity theory of variational integrals was the introduction of {\it nonstandard growth conditions.} The regularity theory for nondegenerate functionals started with the seminal works of Paolo Marcellini \cite{mar89, mar91}, while the study of strongly degenerate functionals, including orthotropic functionals, was developed primarily in the Russian literature, see, for instance, \cite{UU}. 

Among nonstandard growth conditions, the $(p,q)-$growth framework has played a particularly prominent role, becoming a cornerstone of the modern theory. Indeed, it has subsequently led to essentially sharp conditions for the regularity of minimizers \cite{ELM04}: besides the already mentioned paper we further refer, without claiming to be exhaustive, to some Lipschitz regularity results \cite{BM20, BS20, CM15, CMMP23, DFM20, DFM21, DFM23,  EMM1, EMM3, EN23, ELM02, GMP24, Mar20, Mar21, MM03}, including the case of orthotropic structure with nonstandard growth (see \cite{BB20, BBL24, BBLV18, LOS26} and the references therein).
\\
In \cite{mar96} Marcellini drew attention toward genuinely nonstandard growth behaviors, by considering scalar functionals, depending only on the gradient, with fast growth regimes, i.e., superquadric ones reaching up to super-exponential growth; see also the recent contribution \cite{MNP26} where the non-autonomous counterpart of this result is established, although only at the level of the a priori estimate. At the opposite extreme, functionals with at most subquadratic growth, also referred to as 'slow growth',
 have been considered in \cite{EMMP}, also for non-autonomous integrands depending on lower-order terms \cite{EPT25}. These situations require refined techniques, since many standard arguments based on uniform polynomial ellipticity are no longer directly applicable.

The main purpose of this paper is to introduce a unified approach including, as particular cases, energies with both rapidly increasing and slowly increasing growth.\
Our main result applies to growth rates that are just above linear, such as \[
|\xi|\log(1+(\log(\cdots \log(1+|\xi|)\cdots)), 
\]
as well as those that grow faster than exponentially, such as 
\[
(\exp(\cdots(\exp(\exp(|\xi|^{p_0})^{p_1})^{p_2}\cdots)^{p_k},\]
with $p_i>0$, $i=1,\dots k$.

Previous attempts to develope a unified approach include the work of \cite{MP06}, where functionals exhibiting both slow and faster-than-exponential growth are treated, although the Lagrangian depends only on the modulus of the gradient; see also the paper \cite{DM20} for the corresponding a priori estimate in the non-autonomous setting.

\bigskip

\textbf{Regularity and growth assumptions.}  In the sequel we assume the following regularity and growth assumptions on $f$, {\tt (RGA)} for short:
\medskip

\noindent let $f:\mathbb{R}^{n}\rightarrow \lbrack 0\,,+\infty )$ be  convex, with $f\in\mathcal{C}(\mathbb{R}^{n})\cap \mathcal{C}^{2}(\mathbb{R%
}^{n}\backslash B_{t_{0}}(0))$ for some $t_{0}\geq 0$. Assume that there exist two continuous functions  $g_{1},g_{2}:[t_{0}\,,+\infty )\rightarrow (0\,,+\infty )$ and positive constants $C_{1}$, $C_{2}$, $\delta$ and $\alpha$  such that 
\begin{equation}
\left\{ 
\begin{split}
& g_{1}\left( \left\vert \xi \right\vert \right) \left\vert \lambda
\right\vert ^{2}\leq \sum_{i,j=1}^{n}f_{\xi _{i}\xi _{j}}\left( \xi \right)
\lambda _{i}\lambda _{j}\leq g_{2}\left( \left\vert \xi \right\vert \right)
\left\vert \lambda \right\vert ^{2}\,,\;\;\;\forall \;\lambda ,\xi \in 
\mathbb{R}^{n},\;\left\vert \xi \right\vert {\geq }t_{0} \\
&
t\mapsto t^{2} g_{1}(t) \,\,\, \text{is
increasing}\\
& \left(1+\int_{t_0}^t s\sqrt{g_1(s)}\,ds \right)^\delta\geq C_1\, t^{4} g_2(t),
\;\,\,\,\forall \text{$t\geq t_{0}$} \\
& |\xi |^{4} g_{2}(\left\vert \xi \right\vert )\leq C_{2}\,[1+f(\xi
)]^{\alpha },\;\;\;\forall \text{ $\xi \in \mathbb{R}^{n}$%
},\;\left\vert \xi \right\vert \text{$\geq t_{0}$,} 
\end{split}%
\right.  \label{H}
\end{equation}%
under the further assumption that
\begin{equation}
\label{Hpalphadelta}
2<\delta <2^*\qquad \qquad
1 < \alpha < \frac{2}{n} \frac{\delta}{\delta -2},
\end{equation}
where $2^* = \frac{2n}{n-2}$ if $n \ge 3$ while in the case $n=2$, it must be replaced with any fixed positive number greater than $2$.

\bigskip

We say
that $u\in W_{loc}^{1,1}(\Omega )$ is a \textit{local minimizer} of the
integral functional $F$ in \eqref{energy integral} if $f\left( Du\right) \in
L_{loc}^{1}(\Omega )$ and 
\begin{equation*}
\int_{\Omega ^{\prime }}f(Du)\,dx\leq \int_{\Omega ^{\prime }}f(Du+D\varphi
)\,dx
\end{equation*}%
for every open set $\Omega ^{\prime }$, such that $\overline{\Omega ^{\prime }}\subset
\Omega $, and for every $\varphi \in W_{0}^{1,1}(\Omega ^{\prime })$. 

\medskip

The first result of this paper is the following a priori estimate for the local Lipschitz constant of a locally Lipschitz minimizer.

\begin{theorem}[A priori estimate]
\label{step 1} Assume that $f$ satisfies the regularity and growth assumptions {\tt (RGA).} In addition, assume that $f(\xi) $ is of class $\mathcal{C}^{2}(\mathbb{R}^{n})$ and for every $
\Lambda>0$ there exists a positive constant $\ell =\ell (\Lambda)$ such that 
\begin{equation}
\ell \,|\lambda |^{2}\leq \,\sum_{i,j=1}^{n}f_{\xi _{i}\xi _{j}}(\xi
)\,\lambda _{i}\,\lambda _{j}\qquad \forall \lambda ,\xi \in \mathbb{R}%
^{n},\,|\xi |\le \, \Lambda.  \label{supp1}
\end{equation}%

If $u\in W_{\mathrm{loc}}^{1,\infty }(\Omega )$ is a local minimizer of %
\eqref{energy integral}, then, for every $%
0<\rho <R$, $\overline{B}_{R}\subset \Omega $, there exists a positive constant $%
C $ depending on $\rho $, $R$, $C_{1}$, $C_{2}$, $\alpha $, $\delta$, $n$, $g_{1}(t_0)$, but independent of $\ell$, and two positive exponents $\theta_1$, $\theta_2$ depending only on $\delta$, $\alpha$ and $n$ such that 
\begin{equation}
\Vert Du\Vert _{L^{\infty }(B_{\rho }\,;\mathbb{R}^{n})}\leq \,C\,\left\{ 
\frac{1}{(R-\rho )^{\theta_1}}\int_{B_{R}}(1+f(Du))\,dx\right\} ^{\theta_2}.
\label{infinity nu}
\end{equation}%
\end{theorem}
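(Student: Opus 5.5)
We follow the Marcellini scheme of \cite{mar96, EMMP}. The plan is: differentiate the Euler--Lagrange equation, prove a Caccioppoli inequality for powers of a function of $|Du|$, and run a Moser iteration. The structure conditions \eqref{H} are then used to close the iteration in terms of $\int(1+f(Du))$. The exponents $\delta$ and $\alpha$ appear only in this last step.

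\textbf{Step 1: second-order equation.} Since $u\in W^{1,\infty}_{loc}$ and $f\in\mathcal C^2$ satisfies \eqref{supp1} with $\Lambda=\|Du\|_{\infty}$, the Euler equation $\int\sum_i f_{\xi_i}(Du)\varphi_{x_i}\,dx=0$ is uniformly elliptic. Difference quotients then give $u\in W^{2,2}_{loc}$ and the differentiated equation
\[
\int\sum_{i,j}f_{\xi_i\xi_j}(Du)\,u_{x_jx_k}\,\varphi_{x_i}\,dx=0,\qquad k=1,\dots,n .
\]
Here $\ell$ is used only qualitatively, which is why $C$ will not depend on it.

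\textbf{Step 2: weighted Caccioppoli inequality.} Set $G(t)=1+\int_{t_0}^{t}s\sqrt{g_1(s)}\,ds$ for $t\ge t_0$, extended by $G\equiv 1$ for $t<t_0$. In the differentiated equation take
\[
\varphi=\eta^2\,u_{x_k}\,\Phi\bigl((|Du|-t_0)_+\bigr),
\]
with $\Phi$ increasing (essentially a power of $G$ along $|Du|$), and sum over $k$. On $\{|Du|<t_0\}$ nothing is needed. On $\{|Du|\ge t_0\}$, the lower bound $g_1$ from \eqref{H} and the monotonicity of $t^2g_1(t)$ give
\[
\int\eta^2\,|D(G(|Du|))|^2\,G^{\gamma}\,dx\le C(\gamma+1)^{c}\int|D\eta|^2\,g_2(|Du|)\,|Du|^2\,G^{\gamma}\,dx,
\]
using $g_1|D^2u|^2\ge g_1|D|Du||^2$ and $\sqrt{g_1}\,|Du|\,|D|Du||\sim|DG(|Du|)|$. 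Sobolev's embedding (exponent $2^*$, or any finite exponent if $n=2$) turns the left side into $\|\eta\, G^{(\gamma+2)/2}\|_{L^{2^*}}^2$.

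\textbf{Step 3: control of the right-hand side and iteration.} The third condition in \eqref{H} gives $g_2|Du|^2\le C\,G^{\delta}|Du|^{-2}$, and $|Du|^{-2}\le t_0^{-2}$ on the relevant set. The right side is therefore bounded by $\int|D\eta|^2G^{\gamma+\delta}$. This yields a reverse H\"older chain from $G^{\gamma+\delta}$ to $G^{(\gamma+2)2^*/2}$. It gains integrability precisely because $\delta<2^*$; the net gain factor is $2^*/\delta$ after shifting. Iterating on shrinking balls (Moser), we get
\[
\|G(|Du|)\|_{L^\infty(B_\rho)}\le C\,(R-\rho)^{-\theta}\Bigl(\int_{B_R}G^{\delta}\Bigr)^{\vartheta}.
\]
The iteration starts at the lowest exponent; to keep $\int G^{\delta}$ finite, interpolate with $\|G\|_\infty$ on intermediate balls.

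\textbf{Step 4: main obstacle, closing with the energy.} We need to bound $\int_{B_R}G^{\delta}$, or the initial weighted quantity, by $\int(1+f(Du))$. The natural tool is the fourth condition, $|\xi|^4g_2\le C_2(1+f)^{\alpha}$. The integrand at the first step is $|Du|^2g_2$, which this controls by $(1+f)^{\alpha}$. But $\alpha>1$, so this is not directly integrable. I would interpolate, writing
\[
(1+f)^{\alpha}\le(1+f)\,\|1+f(Du)\|_\infty^{\alpha-1},
\]
and bound $\|1+f(Du)\|_\infty$ by a power of $\|G\|_\infty$ through the two growth conditions. This produces an inequality of the form $\|G\|_{L^\infty(B_\rho)}\le C\,(R-\rho)^{-\theta}\,\|G\|_{L^\infty(B_R)}^{\beta}\bigl(\int_{B_R}(1+f)\bigr)^{\vartheta}$.

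The key point is that $\beta<1$ is exactly the condition $\alpha<\tfrac{2}{n}\tfrac{\delta}{\delta-2}$ from \eqref{Hpalphadelta}; checking this exponent bookkeeping is the delicate part. With $\beta<1$, Young's inequality and the standard iteration lemma on $\rho<R$ absorb the $L^\infty$ term. Finally, since $G$ grows at least linearly (because $t^2g_1$ is increasing and $g_1(t_0)>0$), a bound on $G(|Du|)$ gives \eqref{infinity nu}, with $\theta_1,\theta_2$ depending only on $\delta,\alpha,n$.
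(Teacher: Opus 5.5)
The gap is in Step 4, in exactly the bookkeeping you flagged as delicate. The route through $\|1+f(Du)\|_\infty$ does not close under \eqref{Hpalphadelta}.

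None of the structure conditions bounds $f$ from above by the right power of $G$:
\eqref{H}$_4$ is a \emph{lower} bound on $f$, and \eqref{H}$_3$ only controls $|\xi|^4g_2\le C\,G^\delta$. Integrating \eqref{H}$_1$ twice gives, in general, only something like $f(\xi)\lesssim(1+|\xi|)\,G(|\xi|)^{\delta}$.

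With the chain of Step 3, Moser's iteration yields $\|G\|_{L^\infty(B_\rho)}\lesssim\bigl(\int_{B_R}(1+|Du|^4g_2)\bigr)^{\vartheta}$ with $\vartheta=\frac{2^*-2}{2(2^*-\delta)}$. Your exponent is therefore $\beta=e(\alpha-1)\vartheta$, where $\|1+f(Du)\|_\infty\lesssim\|G\|_\infty^{e}$. Since \eqref{Hpalphadelta} amounts to $\delta\frac{\alpha-1}{\alpha}\vartheta<1$, you would need $e\le\delta/\alpha$, i.e. $(1+f)^\alpha\lesssim G^\delta$, which is false in general.

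Concretely, take $n=3$ and $f(\xi)=|\xi|^2+|\xi_1|^3$. All hypotheses hold with $t_0=1$, $g_1=2$, $g_2(t)=8t$ and $\delta=\alpha=5/2$. Here $\alpha\ge 5/2$ is forced along $\xi_1=0$, and $5/2<10/3$. However, $\sup_{|\xi|\le t}f\sim t^3\sim G(t)^{3/2}$ (affine $u=Mx_1$ realizes this), so $\beta\ge\frac32\cdot\frac32\cdot\frac47=\frac97>1$ and the absorption fails. So the claim that $\beta<1$ is exactly \eqref{Hpalphadelta} is wrong for the route you describe.

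The missing idea is to interpolate $V:=|Du|^4g_2(|Du|)$ itself instead of $1+f$. By \eqref{H}$_4$ and \eqref{H}$_3$, $V\le V^{1/\alpha}\|V\|_\infty^{1-1/\alpha}\le C\,(1+f(Du))\,\|G\|_\infty^{\delta(1-1/\alpha)}$. This gives $\beta=\delta\frac{\alpha-1}{\alpha}\frac{2^*-2}{2(2^*-\delta)}$, and $\beta<1$ iff $\alpha<\frac{2^*-2}{2^*}\frac{\delta}{\delta-2}=\frac2n\frac{\delta}{\delta-2}$ (in the example, $\beta=6/7$). Young's inequality and the iteration lemma then conclude.

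The paper performs the same interpolation at the $L^\nu$ level, $\nu=2^*/\delta$, rather than at $L^\infty$:
the $\gamma=0$ Caccioppoli--Sobolev step and \eqref{salamedoca} give $(\int_{B_\rho}V^\nu)^{2/2^*}\le c(R-\rho)^{-2}\int_{B_R}V$. H\"older then splits $\int V$ between $L^\nu$ and $L^{1/\alpha}$, and an iteration over radii absorbs the $L^\nu$ term; the requirement $\beta>2^*/2$ is again \eqref{Hpalphadelta}. A separate Moser iteration bounds $\|Du\|_\infty$ by $\int V$.

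A smaller point: your Step 2 inequality is off by a factor $|Du|^2$. With $\Phi$ a power of $G$, the left side is only $\int\eta^2G^\gamma g_1(|Du|)|D|Du||^2$. You need $\Phi\approx|Du|^2G^\gamma$ (compare the paper's $\Phi(t)=(1+t)^{2\gamma}t^2$). Then the right side is $\int|D\eta|^2G^\gamma|Du|^4g_2\le C\int|D\eta|^2G^{\gamma+\delta}$ directly, which gives the chain you wrote. Your single-$G$ weights are a reasonable alternative to the paper's $\gamma$-dependent $G$ and Proposition \ref{ProposizioneNatale}.
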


The main
result of this paper on the local Lipschitz regularity of minimizers for \textit{general growth conditions}, i.e. under the ellipticity condition 
\eqref{H}$_1$ with $g_{1}$
and $g_{2}$ general functions, can be stated as follows.
\begin{theorem}[Lipschitz regularity]
\label{superlinear} 
Assume that $f$ satisfies the regularity and growth assumptions {\tt (RGA).}
Suppose in addition that
\begin{align}
  & \label{fcoercitiva} \lim_{|\xi|\to\infty}\frac{f(\xi)} {|\xi|} =+\infty ;\\
  & \label{ipotesisug2}
   \liminf_{t\to+\infty}\left(t\,g_2(t)\right)\in(0\,,+\infty].
\end{align}
Then any local minimizer $u\in W_{\mathrm{loc}}^{1,1}(\Omega )$ of 
\eqref{energy
integral} is locally Lipschitz continuous in $\Omega $ and, for every $%
0<\rho <R$, $\overline{B}_{R}\subset \Omega $, there exists a positive constant $%
C $, depending on $\rho $, $R$, $C_{1}$, $C_{2}$, $\alpha $, $\delta$, $n$, $g_{1}(t_{0})$, and two exponents $\theta_1$, $\theta_2$, depending on $\alpha$ and $\delta$, such that 
\begin{displaymath}
\Vert Du\Vert _{L^{\infty }(B_{\rho }\,;\mathbb{R}^{n})}\leq 
C\left\{
\frac{1}{(R-\rho)^{\theta_1}}\int_{B_{R}}(1+f(Du))\,dx\right \} ^{\theta_2}.
\end{displaymath}%
\end{theorem}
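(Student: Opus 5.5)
The plan is to deduce Theorem \ref{superlinear} from the a priori estimate of Theorem \ref{step 1} by a standard approximation scheme. We regularize $f$ and solve a Dirichlet problem on $B_R$ with mollified boundary data. We then show the approximating minimizers are locally Lipschitz, apply \eqref{infinity nu} with constants uniform in the approximation, and pass to the limit using \eqref{fcoercitiva}, convexity and lower semicontinuity.

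\textbf{Step 1: regularization of the integrand.} Fix $B_R$ with $\overline B_R\subset\Omega$. First we modify $f$ inside $B_{t_0}$ (or $B_{t_0+1}$), where it is only continuous, to obtain a $\mathcal C^2$ convex function. Then we mollify and add a small uniformly convex perturbation, setting for instance
\[
f_k(\xi)=(f\ast\varphi_{1/k})(\xi)+\tfrac1k\,h(|\xi|),
\]
where $h$ is a suitable convex function adapted to the growth of $g_1,g_2$. The aim is that $f_k\in\mathcal C^2(\mathbb R^n)$ satisfies \eqref{supp1} with some $\ell=\ell_k$, and also {\tt (RGA)} with constants $C_1,C_2,\alpha,\delta$ and $g_1(t_0)$ independent of $k$. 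The condition $\liminf_{t\to\infty} t\,g_2(t)>0$ in \eqref{ipotesisug2} is what allows the perturbation to be absorbed into $g_2$ and $g_1$ without breaking \eqref{H}$_3$--\eqref{H}$_4$. For slow growth $g_2$ may decay like $1/t$, so a perturbation such as $\frac1k(1+|\xi|^2)^{1/2}$ has second derivative of order $1/t$, which is comparable to $g_2$. We also need $f_k\to f$ locally uniformly.

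\textbf{Step 2: approximating Dirichlet problems and uniform estimate.} Let $u_\varepsilon=u\ast\rho_\varepsilon$ on $B_R$. Let $u_{\varepsilon,k}$ minimize $\int_{B_R} f_k(Dv)$ among $v\in u_\varepsilon+W^{1,1}_0(B_R)$ with finite energy. We need $u_{\varepsilon,k}\in W^{1,\infty}_{loc}(B_R)$ in order to invoke Theorem \ref{step 1}. We would obtain this by a further perturbation, namely adding $\frac1m|\xi|^{q}$ with $q$ large, or using the nondegenerate $(p,q)$ theory for $\mathcal C^2$ integrands with positive definite Hessian (e.g. Marcellini's results). Then \eqref{infinity nu} gives, for $\rho<R'<R$,
\[
\|Du_{\varepsilon,k}\|_{L^\infty(B_\rho)}\le C\Big\{\tfrac{1}{(R'-\rho)^{\theta_1}}\int_{B_{R}}(1+f_k(Du_{\varepsilon,k}))\Big\}^{\theta_2}\le C\Big\{\tfrac{1}{(R'-\rho)^{\theta_1}}\int_{B_R}(1+f_k(Du_\varepsilon))\Big\}^{\theta_2},
\]
where the second inequality uses minimality. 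By Jensen's inequality, $\int_{B_R} f(Du_\varepsilon)\le\int_{B_{R+\varepsilon}} f(Du)$, and $f_k$ is close to $f$ uniformly on compacts. We obtain a bound independent of $k$ and $\varepsilon$. The independence of $C$ from $\ell$ in Theorem \ref{step 1} is essential here.

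\textbf{Step 3: passage to the limit.} The functions $f_k(Du_{\varepsilon,k})$ are bounded in $L^1$ and \eqref{fcoercitiva} holds, so by de la Vallée Poussin $Du_{\varepsilon,k}$ is equi-integrable. Hence $u_{\varepsilon,k}\rightharpoonup v_\varepsilon$ weakly in $W^{1,1}(B_R)$, and then $v_\varepsilon\rightharpoonup v$, with $v-u\in W^{1,1}_0(B_R)$. The $L^\infty$ bound on $B_\rho$ passes to the limit by weak$^*$ lower semicontinuity. Lower semicontinuity of the convex functional gives $\int_{B_R} f(Dv)\le\int_{B_R} f(Du)$, so $v$ is also a minimizer with the same boundary data.

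The main obstacle is concluding $v=u$, since $f$ is not strictly convex inside $B_{t_0}$. We would argue as follows. By convexity $w=(u+v)/2$ is also a minimizer, and on the set where $|Du|\ge t_0$ or $|Dv|\ge t_0$ the strict convexity from \eqref{H}$_1$ forces $Du=Dv$. This leaves only the region where both gradients lie in $\overline B_{t_0}$, where $|Du|$ is already bounded. Hence $|Du|\le\max\{t_0,\|Dv\|_{L^\infty(B_\rho)}\}$ on $B_\rho$, which gives the estimate in Theorem \ref{superlinear}, possibly after enlarging $C$ so that it absorbs $t_0$. This requires a careful version of the strict convexity argument on the segment $[Du,Dv]$, for which the segment must avoid $B_{t_0}$ except on a null set. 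If that fails, we would instead use the standard trick of proving the estimate for $|Du|$ truncated at level $t_0$. The other delicate point is checking in Step 1 that the regularized $f_k$ really satisfy \eqref{H}$_3$ uniformly.
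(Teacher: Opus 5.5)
Your overall architecture is the paper's: regularized integrands built by adding a term like $\frac1k\sqrt{1+|\xi|^2}$, whose Hessian is $O(1/(kt))$ and which \eqref{ipotesisug2} lets you absorb into $2g_2$; Dirichlet problems on $B_R$ with mollified data; the $\ell$-independent estimate of Theorem \ref{step 1}; and passage to the limit via \eqref{fcoercitiva}. The genuine gap is in Step 2. There you need $u_{\varepsilon,k}\in W^{1,\infty}_{\mathrm{loc}}(B_R)$ before Theorem \ref{step 1} can be applied, and neither of your suggestions gives this across the range of the theorem. For exponential or super-exponential integrands (Example \ref{exponential}, or the iterated exponentials of the introduction) the $(p,q)$ theory does not apply. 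Adding $\frac1m|\xi|^q$ changes nothing, since $f_k$ still outgrows every power, so the ratio of the ellipticity bounds is not improved. Marcellini's general-growth results assume monotonicity of $g_1,g_2$, which is exactly what this paper removes.

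The missing idea, which the paper uses, is to exploit the boundary datum rather than the growth. Since $u_\varepsilon$ is smooth on $\overline B_R$ and the ball is uniformly convex, $u_\varepsilon|_{\partial B_R}$ satisfies the bounded slope condition. The classical Hartman--Stampacchia theory (Giusti, Ch.~1) then gives, for the strictly convex $f_k$, a minimizer in the Lipschitz class that is globally Lipschitz, whatever the growth of $f_k$. Because no Lavrentiev phenomenon occurs for these autonomous convex functionals, and the $W^{1,1}$ minimizer is unique, this yields $u_{\varepsilon,k}\in u_\varepsilon+W^{1,\infty}_0(B_R)$.

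A secondary point concerns Step 1: do not mollify $f$ globally. After your modification near $\overline B_{t_0}$ the function is already $\mathcal C^2$. Convolution at a fixed scale $1/k$ can destroy the uniform upper bound $D^2f_k\le C\,g_2(|\xi|)$ when $g_1,g_2$ grow super-exponentially, e.g. $f(\xi)=\exp(e^{|\xi|})$. A fixed fraction of the mass of $\varphi_{1/k}$ sits where $|\xi-\eta|\ge|\xi|+\frac1{2k}$, and $g_1(t+\frac1{2k})/g_2(t)\to\infty$. The regularization must therefore be confined to a neighbourhood of $\overline B_{t_0}$, with $f_k=f+\frac1k h$ outside it.

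Your endgame, by contrast, is correct, and it is a self-contained substitute for the paper's appeal to \cite{EPT25}. Equality in $\int f(\frac{Du+Dv}{2})\le\frac12\int f(Du)+\frac12\int f(Dv)$ makes $f$ affine on $[Du(x),Dv(x)]$ for a.e. $x$. If one endpoint lies outside $\overline B_{t_0}$, an initial piece of the segment lies where $D^2f\ge g_1>0$, which forces $Du(x)=Dv(x)$. So the segment need not avoid $B_{t_0}$: the inclusion $\{Du\ne Dv\}\subset\{|Du|\le t_0,\ |Dv|\le t_0\}$ follows directly, and it gives $|Du|\le\max\{t_0,\|Dv\|_{L^\infty(B_\rho)}\}$.
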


Assumptions \eqref{fcoercitiva} and \eqref{ipotesisug2} are purely technical. Their role is to guarantee that the functional can be approximated by a sequence of functionals for which the required a priori estimate holds, and to justify passing to the limit in the minimality condition (see the proof of the theorem). It is worth emphasizing, however, that these assumptions are not needed in the derivation of the a priori estimate itself (see Example \ref{linear-growth}). Moreover, they are automatically satisfied for functionals with $(p,q)-$growth (see Example \ref{examplepq}).
Note that, traditionally, reference is made to $(p,q)-$growth conditions when the Lagrangian satisfies the ellipticity conditions \eqref{H}$_1$ with $g_1(t)=mt^{p-2}$ and $g_2(t)=Mt^{q-2}$. In the autonomous case, when $1<p<q$, for sufficiently large $|\xi|$, this implies the $(p,q)-$growth condition (see \cite[Lemma 7.2]{EMMP})
\[
c|\xi|^p \le f(\xi) \le C|\xi|^q.
\]
We observe that, if $1=p<q$, $f$  still satisfies hypotheses \eqref{fcoercitiva} and \eqref{ipotesisug2} since, in this case, \eqref{H}$_1$ implies
\[
c|\xi|\log|\xi| \le f(\xi) \le C|\xi|^q
\]
for sufficiently large $|\xi|$.
\\
As for linear growth, and thus in the absence of coercivity, the hypotheses of Theorem 1.1 can still be satisfied, as in Example \ref{linear-growth}. However, this does not hold for all linear growths; for instance, the Lagrangian of the area functional, $f(\xi)=\sqrt{1+|\xi|^2}$, does not satisfy all conditions \eqref{H}.

\medskip The paper is organized as follows. In Section \ref{Section2} we present some examples of admissible choices for the functions $g_1$ and $g_2$ satisfying our structural assumptions {\tt (RGA)}.


 Among them,
 we underline that, in Example \ref{exponential}, we consider an anisotropic exponential growth setting which, to the best of our knowledge, has received little attention in the literature. We can compare this example with those considered in \cite[Theorem 1.4]{DFM21} and \cite[Section 3.2]{MNP26}, observing that the two settings are substantially different. Indeed, in both \cite{DFM21} and \cite{MNP26}, the functionals are isotropic in the gradient and the exponential growth is determined pointwise by the spatial variable $x$ (so that different choices of $x$ correspond to different exponential growth), whereas in our autonomous framework the anisotropy is intrinsic to the growth itself and does not arise from an explicit $x$-dependence.

Section \ref{Preliminary lemma} collects a number of technical preliminary results that will be used throughout the paper. In particular, Proposition \ref{ProposizioneNatale} plays a crucial role in the proof of the a priori estimate, which is carried out in Section \ref{A priori estimates}. 

Finally, in Section \ref{Section5} we complete the proof of the main theorem. We emphasize the role of the coercivity assumption, which is the key ingredient allowing us to pass to the limit in the approximation procedure and preserve the minimality property.

To conclude this introduction we would like to mention that the previous papers \cite{EMMP, mar96, MNP26} on general fast or slow growth are based respectively  on the  increasing and decreasing properties of the functions $g_1$ and $g_2$ while
the main novelty of our approach is that, 
we replace these monotonicity assumptions  with condition \eqref{H}$_{2}$.

Moreover, it is worth to highlight that, as in \cite{EMMP}, $f$ is assumed to be $\mathcal{C}^2$ and to satisfy the  growth assumptions \eqref{H}  only for large values of $\left\vert\xi \right\vert$. Therefore, our work fits into a research direction introduced by \cite{CE86} and motivated by the assumption, in the context of the study of elastic energy, that materials cannot bear excessively high energy densities. For further references on this topic, we refer to \cite{GT}.
\bigskip

\section{Some examples}

\label{Section2}

\bigskip
\begin{example}
\label{examplepq}
{\it (($p,q$)$-$growth)} This is the case when \eqref{H}$_1$ holds for $g_{1}(t)=mt^{p-2}$ and $g_{2}(t)=Mt^{q-2}$, $1\leq p<q$. 
Therefore $t^2g_1(t)=mt^p$, so \eqref{H}$_{2}$ is satisfied.
\\
Moreover, for
every $t\geq t_0$, 
\begin{equation*}
t^4g_2(t)=Mt^{q+2}
\end{equation*}%
and
\[
\int_{t_0}^ts\sqrt{g_1(s)}ds
=\sqrt{m}\int_{t_0}^t s^{p/2}ds=\frac{2\sqrt{m}}{p+2}\,t^{(p+2)/2}-\frac{2\sqrt{m}}{p+2}\,t_0^{(p+2)/2}
\]
then \eqref{H}$_{3}$ holds if $\delta\,\frac{p+2}2=q+2$, that is $\delta=2\frac{q+2}{p+2}$. 
\\
By Lemma 7.2 in \cite{EMMP}, there exists a constant $c>0$ such that for $|\xi |$ large enough
$1+f(\xi)\geq c|\xi|^p$. Therefore for $\alpha=(q+2)/p$ we have
\begin{equation*}
g_{2}(|\xi |)|\xi |^{4}=M|\xi |^{q+2}= M|\xi |^{p\alpha} \leq \frac{M}{c^\alpha}[1+f(\xi )]^{\alpha }
\end{equation*}%
so all the conditions in \eqref{H} are satisfied. 
\\
By Lemma 7.2 in \cite{EMMP}, $f(\xi)\geq \frac m2 |\xi|\log |\xi|$ and $tg_2(t)=mt^{q-1}$, therefore \eqref{fcoercitiva} and \eqref{ipotesisug2} hold.
\\
 Therefore, all the assumptions of Theorem \ref{superlinear}
are satisfied if \eqref{Hpalphadelta} is satisfied, that is 
\begin{equation*}
\alpha <\frac 2n\frac{\delta}{\delta-2} 
\quad\iff\quad
\frac{q+2}{p}<\frac 2n \,\frac{q+2}{p+2}\,\frac{p+2}{q-p}
\quad\iff\quad
\frac{q}{p}<\frac {n+2}n.
\end{equation*}%

\end{example}

\medskip
\begin{example}
\label{linear-growth}
{\it (Linear growth)} Consider  a $\mathcal{C}^2(\R^n)$ function satisfying 
\eqref{supp1}
with
\[
f(\xi)=g(|\xi|)=|\xi|-|\xi|^p, \qquad 0<p<1
\]
out of a fixed ball.
In this case there exist $m,M>0$, such that, for $t$ sufficiently large,
\[
g_1(t)= mt^{p-2}
\quad\text{and}\quad
g_2(t)=Mt^{-1}.
\]
We remark that this situation can be regarded as a  $(p,q)-$growth case with $p<q=1$.
\\
Therefore in \eqref{H} we can take $\delta=\frac{6}{p+2}$ and $\alpha=3$, so that \eqref{Hpalphadelta} is satisfied if
\[\frac 1p<\frac{n}{n-2}.\]
Since the coercivity condition \eqref{fcoercitiva} is not satisfied, the local Lipschitz continuity of the minimizers cannot be guaranteed.
\end{example}

\medskip
\begin{example}
\label{exponential}
{\it (Anistropic exponential growth)} 
\[
f(\xi) = e^{p |\xi|} + e^{q \xi_1}
\]
$0<p\leq q$.
Notice that $f(\xi) =f_p(\xi)+f_q(\xi)$ where $f_p(\xi)= e^{p |\xi|}$ and $f_q(\xi)=e^{q \xi_1}$ .
Therefore the quadratic form associated to the Hessian matrix turns to be $\mathcal{Q}(\lambda)=\mathcal{Q}_p(\lambda)+\mathcal{Q}_q(\lambda)$,
where $\mathcal{Q}_q(\lambda)=q^2 e^{q\xi_1}\lambda_1^2$ and, by (3.3) in \cite{MP06}, for every $\xi$, $|\xi| \geq 1$,
\[
\frac{p}{|\xi|} e^{p|\xi|}|\lambda|^2
\leq\mathcal{Q}_p(\lambda)\leq
p^2 e^{p|\xi|}|\lambda|^2.
\]
Therefore, for every $\xi$, $|\xi| \geq 1$,
\[
\frac{p}{|\xi|} e^{p|\xi}|\lambda|^2
\leq\mathcal{Q}(\lambda)\leq
p^2 e^{p|\xi|}|\lambda|^2+q^2 e^{q\xi_1}\lambda_1^2
\leq 2q^2e^{q|\xi|}|\lambda|^2,
\]
so that \eqref{H}$_1$ holds with $t_0=1$, $g_1(t)=\frac{p}{t} e^{pt}$ and $g_2(t)=2q^2e^{qt}$.
\\
Condition \eqref{H}$_2$  is obviously satisfied. Moreover, \eqref{H}$_3$ and \eqref{H}$_4$ hold if $\alpha>q/p$ and $\delta>2q/p$ respectively. Therefore, there exist $\alpha$ and $\delta$  satisfying \eqref{Hpalphadelta} whenever 
\begin{equation}\label{pq}
    \frac{q}{p}<\frac{n+2}{n}.
\end{equation} 
 In this case, if \eqref{pq} holds, it follows from Theorem~\ref{superlinear} that the minimizers of \eqref{energy integral} are locally Lipschitz.
 \end{example}


\section{Some technical results}

\label{Preliminary lemma}

The proof of the a priori estimate relies on some preliminary and technical result that we collect and prove in this section.

Hypothesis (\ref{H})$_{3}$ can be rewritten, without loss of generality, assuming $t_0=1$. Hence, from now on, we will use the following  equivalent formulation.
\begin{equation}
\label{etichetta}
\left [1+\int_{0}^t (1+ s)\sqrt{g_1(1+s)}\,ds \geq C_1(1+t)^{4/\delta} g_2^{1/\delta}(1+t) \right ], \quad\forall t\geq 0.
\end{equation}
\begin{lemma}
Inequality \eqref{etichetta} turns out to be equivalent to 
\begin{equation}
\label{salamedoca}
1+\int_{0}^t s\sqrt{g_1(1+s)}\,ds \geq C \, (1+t)^{4/\delta} g_2^{1/\delta}(1+t)
\end{equation}
for some constant $C > 0$ and for all $t \geq 0.$
\end{lemma}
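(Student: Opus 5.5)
The plan is to compare the two integrals directly. Set $h(s)=\sqrt{g_1(1+s)}$ for $s\ge 0$, and define
\[
I(t)=\int_0^t (1+s)\,h(s)\,ds,\qquad J(t)=\int_0^t s\,h(s)\,ds .
\]
Then \eqref{etichetta} reads $1+I(t)\ge C_1(1+t)^{4/\delta}g_2^{1/\delta}(1+t)$, and \eqref{salamedoca} reads $1+J(t)\ge C(1+t)^{4/\delta}g_2^{1/\delta}(1+t)$. It therefore suffices to show that $1+J$ and $1+I$ are comparable, i.e.
\[
1+J(t)\le 1+I(t)\le K\,(1+J(t))\qquad\forall t\ge 0,
\]
for some constant $K\ge 1$.

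The first inequality is immediate, since $s\le 1+s$ and $h>0$. Hence \eqref{salamedoca} implies \eqref{etichetta} with $C_1=C$.

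For the converse, the only issue is near $s=0$, where the weight $s$ degenerates while $1+s$ does not.

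Since $g_1$ is continuous and positive on $[t_0,+\infty)=[1,+\infty)$, the quantity
\[
M:=\max_{s\in[0,1]}h(s)
\]
is finite. I will split into two cases.

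If $t\le 1$, then $I(t)\le 2M$, so
\[
1+I(t)\le 1+2M\le (1+2M)\,(1+J(t)).
\]

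If $t>1$, I split the integral at $s=1$ and use $1+s\le 2s$ for $s\ge 1$:
\[
I(t)\le \int_0^1 2h\,ds+2\int_1^t s\,h(s)\,ds\le 2M+2J(t).
\]
Hence $1+I(t)\le (1+2M)+2J(t)\le (1+2M)(1+J(t))$.

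In both cases we may take $K=1+2M$. Combining this with \eqref{etichetta} gives \eqref{salamedoca} with $C=C_1/(1+2M)$.

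The only delicate point, which is hardly an obstacle, is this control of the region $s\in[0,1]$. The monotonicity in \eqref{H}$_2$ only bounds $g_1$ from below, so it does not help here. Instead I rely on the continuity of $g_1$ on the compact interval $[1,2]$. As a consequence, the constant $C$ depends also on $\max_{[t_0,2t_0]}g_1$, in addition to $C_1$.
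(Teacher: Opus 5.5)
Your argument is essentially the paper's. You split $\int_0^t(1+s)\sqrt{g_1(1+s)}\,ds$ at $s=1$ and bound the piece on $[0,1]$ by a constant that comes from the continuity of $g_1$ on $[1,2]$; the paper uses $\tilde K=\int_0^1\sqrt{g_1(1+s)}\,ds$ where you use $2M$. You then use $1\le s$ on $[1,t]$ to absorb the rest into $2J(t)$. The first implication and your intermediate bound $1+I(t)\le 1+2M+2J(t)$ are both correct.

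There is a slip in the last step of the case $t>1$. The inequality $(1+2M)+2J(t)\le(1+2M)(1+J(t))$ is equivalent to $(1-2M)J(t)\le 0$. Since $J(t)>0$ for $t>0$, this requires $M\ge 1/2$, which need not hold. For example, take $g_1\equiv\varepsilon^2$, which is admissible for \eqref{H}$_2$. Then $M=\varepsilon$ and $(1+2M)(1+J(t))-(1+I(t))=\varepsilon(2-t+\varepsilon t^2)$, which is negative for $\varepsilon=10^{-2}$, $t=10$. So $K=1+2M$ does not work in general.

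The fix is immediate. Your correct intermediate bound gives $1+I(t)\le \max\{1+2M,2\}\,(1+J(t))$, so take $K=\max\{1+2M,2\}$ and $C=C_1/K$. This matches the paper's choice $K=\max\{1+\tilde K,2\}$.

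A side remark: \eqref{H}$_2$ does bound $g_1$ from above on $[1,2]$, since $g_1(1+s)\le 4g_1(2)$ for $s\in[0,1]$. Continuity alone is enough, though, so this does not affect your proof.
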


\begin{proof}
Obviously \eqref{salamedoca} implies \eqref{etichetta}. To show the opposite inequality, it is sufficient to prove that there exists a constant $K > 0$ such that, for all $t \ge \, 0$
\[
1+\int_{0}^t (1 + s) \sqrt{g_1(1+s)}\,ds \le \, K \left [1+\int_{0}^t s \sqrt{g_1(1+s)}\,ds \right ].
\]
We have
\[
\begin{split}
 1+\int_{0}^t (1 + s) \sqrt{g_1(1+s)}\,ds
&= 1+\tilde{K} +\int_{1}^t \sqrt{g_1(1+s)}\,ds  + \int_{0}^t s \sqrt{g_1(1+s)}\,ds \\
&\le 1 + \tilde{K} + \int_1^t s \cdot \sqrt{g_1(1 + s)}\, ds + \int_{0}^t s \sqrt{g_1(1+s)}\,ds \\
&\le 1 + \tilde{K}  + 2 \int_{0}^t s \sqrt{g_1(1+s)}\,ds \le K \left [1 + \int_{0}^t s \sqrt{g_1(1+s)}\,ds \right ],
\end{split}
\]
where
\[
\tilde{K} := \int_{0}^1 \sqrt{g_1(1+s)}\,ds\qquad K := \max \{(1 + \tilde{K}), 2\}.
\]
\end{proof}

We proceed with the following technical lemma that is a part of \cite[Lemma 3.4]{mar93}. 
\begin{lemma}
\label{lemmaMarcellini}
Let $g(t), h(t)$ two non negative, increasing functions on $[0, + \infty).$ Then
\[
\int_a^b g(t) \, dt \, \cdot \int_a^b h(t) \, dt \le \, (b - a) \, \int_a^b g(t) h(t) \, dt \qquad \forall a,b \in [0, \infty).
\]
\end{lemma}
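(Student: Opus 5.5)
This is Chebyshev's integral inequality for similarly ordered functions, and I will prove it by the standard symmetrization trick. Assume first $a<b$; the case $a=b$ is trivial since both sides vanish, and for $a>b$ both integrals and the factor $b-a$ change sign consistently, so the inequality reduces to the case $a<b$. Since $g$ and $h$ are monotone on $[a,b]$, they are Riemann (hence Lebesgue) integrable on bounded intervals, and so is $gh$. Nonnegativity ensures that all quantities are finite and that no sign issues arise in the products.

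The key observation is the pointwise inequality
\[
\bigl(g(t)-g(s)\bigr)\bigl(h(t)-h(s)\bigr)\ge 0 \qquad \forall\, s,t\in[a,b],
\]
which holds because both functions are increasing: the two factors always have the same sign. Integrating this over the square $[a,b]\times[a,b]$ and expanding the product gives
\[
\int_a^b\!\!\int_a^b \bigl(g(t)h(t)+g(s)h(s)-g(t)h(s)-g(s)h(t)\bigr)\,ds\,dt\ \ge 0 .
\]
By Fubini's theorem, the first two terms each contribute $(b-a)\int_a^b gh\,dt$. The last two terms each contribute $\int_a^b g\,dt\cdot\int_a^b h\,dt$. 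Therefore
\[
2(b-a)\int_a^b g h\,dt - 2\int_a^b g\,dt\int_a^b h\,dt\ \ge 0,
\]
which is exactly the claimed inequality.

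There is no real obstacle. The only points needing a word of justification are the integrability of $g$, $h$ and $gh$ needed for Fubini, and the reduction from $a>b$ to $a<b$. In the application, a similar argument will be used with monotone functions such as $s\mapsto s$ and $s\mapsto s\sqrt{g_1(s)}$, the latter being increasing by \eqref{H}$_2$.
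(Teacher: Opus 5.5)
Your proof is correct. Integrating the pointwise inequality $\bigl(g(t)-g(s)\bigr)\bigl(h(t)-h(s)\bigr)\ge 0$ over $[a,b]^2$ and applying Fubini gives exactly the claim. The integrability is fine, because a monotone function on $[a,b]$ is bounded between its endpoint values. Your reduction of the case $a>b$ is also right, since the sign changes on each side occur in pairs.

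The paper does not prove this lemma; it quotes it as part of Lemma 3.4 in \cite{mar93}. So your argument is less a competing route than a self-contained replacement for the citation: it is the classical symmetrization proof of Chebyshev's integral inequality.

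What your approach buys is simplicity and a slightly sharper statement. A proof by differentiating $t\mapsto t\int_0^t gh-\int_0^t g\int_0^t h$ in the endpoint (its derivative is $\int_0^t (g(t)-g(s))(h(t)-h(s))\,ds$) would need extra care for merely monotone, possibly discontinuous $g,h$; yours avoids this entirely. Your argument also shows that the nonnegativity hypothesis is superfluous. Finiteness comes from monotonicity on a compact interval, and the sign of the integrand comes from the common monotonicity, so your remark attributing these to nonnegativity is slightly off, though harmless.

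In the paper the lemma is only used on intervals $[0,t]$: first with $(1+s)^{\gamma-1}s$ and $(1+s)\sqrt{g_1(1+s)}$, then with $(1+s)^{\gamma-1}$ and $s$. Your proof covers these uses verbatim.
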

The following technical lemma instead is a part of \cite[Lemma 2.2]{EMM3}. 
\begin{lemma}
\label{lemma_Marcellini_Advances}
Let $\gamma_0 > 0.$ Then there exists a positive constant $C'$ depending on $\gamma_0$ but independent of $\gamma \ge \, \gamma_0$ and of $t \ge 0$ such that
\[
(1 + t)^{\gamma} \le \, C' \, \gamma^2 \left (1 + \int_0^t (1 + s)^{\gamma - 2} s \, ds \right )
\]
for every $\gamma \in [\gamma_0, + \infty)$ and every $t \in [0, + \infty).$
\end{lemma}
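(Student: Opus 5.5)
Since $1+t\ge 1$, both sides of the inequality are positive, so it suffices to treat the two regimes $t\le 1$ and $t\ge 1$ separately and take the larger of the two constants.

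\textbf{Regime $t\in[0,1]$.} The integral $\int_0^t(1+s)^{\gamma-2}s\,ds$ is nonnegative, so the right-hand side is at least $C'\gamma^2$. The left-hand side satisfies $(1+t)^\gamma\le 2^\gamma$. It is therefore enough to have
\[
2^\gamma\le C'\gamma^2 \quad\text{for all }\gamma\ge\gamma_0 ,
\]
and this is false for large $\gamma$. So this crude bound only works for $\gamma$ in a bounded range, and the integral itself must be used.

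For $t\in[0,1]$ and $s\in[t/2,t]$ we have $s\ge t/2$ and $(1+s)\ge(1+t)/\sqrt2$. This gives a lower bound of the form
\[
\int_0^t(1+s)^{\gamma-2}s\,ds\ \gtrsim\ t^2 (1+t/2)^{\gamma-2}.
\]
Here the ratio $(1+t)^\gamma/(1+t/2)^\gamma$ is exponential in $\gamma$, so we must integrate near $s=t$. More precisely, for $s\in[t-1/\gamma,\,t]$ (when $t\ge 1/\gamma$) we have
\[
(1+s)^{\gamma-2}\ge(1+t)^{\gamma-2}\Bigl(1-\tfrac{1}{\gamma(1+t)}\Bigr)^{\gamma}\ge c\,(1+t)^{\gamma-2},
\]
with $c$ an absolute constant, because $(1-1/\gamma)^\gamma\ge 1/4$ for $\gamma\ge2$; the case $\gamma<2$ is handled by restricting to a compact range of $\gamma$. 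Also $s\ge t/2$ as soon as $t\ge 2/\gamma$. Hence
\[
\int_0^t(1+s)^{\gamma-2}s\,ds\ \ge\ c\,\frac{t}{2\gamma}\,(1+t)^{\gamma-2}.
\]

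\textbf{Regime $t\ge 1$ (and in fact all $t\ge 2/\gamma$).} The bound above gives
\[
\gamma^2\int_0^t(1+s)^{\gamma-2}s\,ds\ \ge\ \frac{c}{2}\,\gamma\, t\,(1+t)^{\gamma-2}.
\]
This is not yet $\gtrsim(1+t)^\gamma$, since a factor $(1+t)^2/(\gamma t)$ is missing. A better window is needed: take $s\in[t/2,\,t]$ when $\gamma\,t\lesssim 1+t$, and otherwise $s\in[t-(1+t)/\gamma,\,t]$. On the second window we have $(1+s)\ge(1+t)(1-1/\gamma)$ and $s\ge t/2$ provided $(1+t)/\gamma\le t/2$. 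The integral is then at least
\[
c\,(1+t)^{\gamma-2}\,\frac{t}{2}\,\frac{1+t}{\gamma}\ \ge\ \frac{c}{4\gamma}\,(1+t)^{\gamma},
\]
using $t\ge(1+t)/2$ when $t\ge1$. Multiplying by $\gamma^2$ gives $\gtrsim\gamma(1+t)^\gamma\ge\gamma_0(1+t)^\gamma$, which is the claim.

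\textbf{Remaining case: small $t$, or $\gamma t\lesssim 1+t$.} Then $t\le C/\gamma$ (with the threshold adjusted by $\gamma_0$), so
\[
(1+t)^\gamma\le e^{\gamma t}\le e^{C},
\]
and the left-hand side is bounded by a constant; this is absorbed by the summand $1$ on the right.

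The main obstacle is choosing the integration window near $s=t$ so that the lower bound on $(1+s)^{\gamma-2}$ loses only an absolute constant, uniformly in $\gamma\ge\gamma_0$. For $\gamma_0\le\gamma\le 2$ the exponent $\gamma-2\le0$ and the argument simplifies: on $[0,t]$ we have $(1+s)^{\gamma-2}\ge(1+t)^{\gamma-2}$, so the integral is at least $\tfrac{t^2}{2}(1+t)^{\gamma-2}$, which is $\gtrsim(1+t)^\gamma$ for $t\ge1$. The constant $C'$ then depends only on $\gamma_0$.
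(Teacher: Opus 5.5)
The paper gives no proof of this lemma (it is quoted from \cite[Lemma 2.2]{EMM3}), so your argument has to stand on its own. Your core idea is right: on the window $[t-(1+t)/\gamma,\,t]$ you have $(1+s)^{\gamma-2}\ge(1-1/\gamma)^{\gamma}(1+t)^{\gamma-2}\ge\frac14(1+t)^{\gamma-2}$, and outside the window regime you bound $(1+t)^\gamma\le e^{\gamma t}$.

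There is, however, a gap: the case split is not uniform in $\gamma$. The step that fails is ``Then $t\le C/\gamma$'' in the remaining case. The window lies in $[t/2,t]$ exactly when $\gamma t\ge 2(1+t)$, i.e.\ $(\gamma-2)t\ge 2$. So the remaining case is $(\gamma-2)t<2$. For $\gamma\le 2$ this is every $t$, and as $\gamma\downarrow 2$ it allows $t$ up to $2/(\gamma-2)\to\infty$. There $e^{\gamma t}$ is not bounded by any fixed constant. Your monotonicity paragraph covers $\gamma\in[\gamma_0,2]$, but nothing in the proposal gives a $\gamma$-independent constant for $\gamma\in(2,2+\varepsilon)$. ``Adjusting the threshold by $\gamma_0$'' cannot help, because the bad range sits near $2$, not near $\gamma_0$.

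The repair uses a step you announced but did not carry out. In the remaining case $\gamma t<2(1+t)$, split on $t$.

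If $t<1$, then $\gamma t<4$, so $(1+t)^\gamma<e^4\le (e^4/\gamma_0^2)\,\gamma^2$.

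If $t\ge1$, then $\gamma<2(1+t)/t\le 4$. On $[t/2,t]$ you have $(1+s)^{\gamma-2}\ge 2^{-|\gamma-2|}(1+t)^{\gamma-2}\ge\frac14(1+t)^{\gamma-2}$ and $s\ge t/2\ge(1+t)/4$. Hence $\int_0^t(1+s)^{\gamma-2}s\,ds\ge\frac1{64}(1+t)^\gamma$, and the right-hand side is at least $\frac{\gamma_0^2}{64}C'(1+t)^\gamma$.

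In the window case you need neither $t\ge1$, nor the preliminary window $[t-1/\gamma,t]$, nor a separate treatment of $\gamma\le2$, since $\gamma t\ge 2(1+t)$ forces $\gamma>2$. The window directly gives
$\gamma^2\int_0^t(1+s)^{\gamma-2}s\,ds\ge\frac{\gamma t}{8}(1+t)^{\gamma-1}\ge\frac14(1+t)^\gamma$, using $\gamma t\ge2(1+t)$.

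Alternatively, for all $\gamma\ge2$ you can use the exact identity
$\gamma(\gamma-1)\int_0^t(1+s)^{\gamma-2}s\,ds=(1+t)^{\gamma-1}\big((\gamma-1)t-1\big)+1$
and split on whether $(\gamma-1)t\ge2$.
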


The following result will be important when dealing with the a priori estimate.
\begin{proposition}
\label{ProposizioneNatale}
Assume that $g_1, g_2: [1, + \infty) \rightarrow (0, + \infty)$ are such that
\begin{equation}
\left\{ 
\begin{split}
& t \mapsto t^2 g_1(t) \,\,\, \textnormal{is increasing}\\
&  1 + \int_0^t s \sqrt{g_1(1 + s)} \, ds \ge \, C \, (1 + t)^{4/\delta} g_2(1 + t)^{1/\delta} \\
& \qquad \qquad \textnormal{for some $\delta > 0$ and some $C > 0.$}
\end{split}
\right. \label{HpMichela}
\end{equation}
Then there exists $\bar{C} > 0$ such that $\forall \gamma \ge 0$ and $\forall t \ge 0$
\begin{equation}
\label{tesipropStefania}
1 + \int_0^t (1 + s)^{\gamma} s \, \sqrt{g_1(1 + s)} \, ds \ge \, \bar{C} \, \left [1 + \frac{(1 + t)^{\gamma + 4/\delta}}{{(\gamma+1)^2}} g_2(1 + t)^{1/\delta} \right].
\end{equation}
\end{proposition}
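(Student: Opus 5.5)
The plan is to treat separately the regime where the factor $(1+t)^{\gamma}$ is the dangerous part (bounded $t$) and the regime where the growth of $g_2$ matters (large $t$). The first regime is handled by Lemma \ref{lemma_Marcellini_Advances}, the second by Lemma \ref{lemmaMarcellini}, and the two bounds are then combined.

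\textbf{Setup.} Put $h(s):=s\sqrt{g_1(1+s)}$ and $A(t):=(1+t)^{4/\delta}g_2(1+t)^{1/\delta}$.

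\emph{Monotonicity of $h$.} Since $t\mapsto t^2g_1(t)$ is increasing, so is $s\mapsto (1+s)\sqrt{g_1(1+s)}$. Hence $h(s)=\frac{s}{1+s}\,(1+s)\sqrt{g_1(1+s)}$ is a product of two nonnegative increasing functions, and is itself nonnegative and increasing on $[0,\infty)$.

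\emph{Constants.} By continuity and positivity of $g_1,g_2$ on $[1,2]$ there are constants $m,M>0$ with $\sqrt{g_1(1+s)}\ge m$ and $A(s)\le M$ for $s\in[0,1]$. Set also $\kappa:=\int_0^1h\,ds>0$.

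\textbf{Step 1 ($0\le t\le 1$).} Here I use $h(s)\ge m s$, which gives
\[
1+\int_0^t(1+s)^\gamma h\,ds\ \ge\ \min\{1,m\}\Big(1+\int_0^t (1+s)^{(\gamma+2)-2}s\,ds\Big)\ \ge\ \frac{\min\{1,m\}}{C'}\,\frac{(1+t)^{\gamma+2}}{(\gamma+2)^2}.
\]
The last inequality is Lemma \ref{lemma_Marcellini_Advances} with $\gamma_0=2$, applied to the exponent $\gamma+2\ge 2$.

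Using $(\gamma+2)^2\le 4(\gamma+1)^2$ and $A(t)\le M$, the right-hand side dominates $c\,(1+t)^{\gamma}A(t)/(\gamma+1)^2$. Averaging with the trivial bound $\ge 1$ then yields \eqref{tesipropStefania} on $[0,1]$.

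This step is the one I expected to be the main obstacle. The naive bound $(1+t)^\gamma\le 2^\gamma$ is not uniform in $\gamma$ once divided by $(\gamma+1)^2$, which is exactly why Lemma \ref{lemma_Marcellini_Advances} is needed.

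\textbf{Step 2 ($t\ge1$).} Apply Lemma \ref{lemmaMarcellini} on $[0,t]$ with the increasing functions $(1+s)^\gamma$ and $h$:
\[
\int_0^t(1+s)^\gamma h\,ds\ \ge\ \frac{(1+t)^{\gamma+1}-1}{(\gamma+1)\,t}\int_0^th\,ds .
\]
For $t\ge1$ we have $(1+t)^{\gamma+1}-1\ge\frac12(1+t)^{\gamma+1}$ and $t\le 1+t$, so the prefactor is at least $\frac{(1+t)^\gamma}{2(\gamma+1)}$.

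Next I bound $X:=\int_0^th\,ds$ from below by a multiple of $A(t)$. By \eqref{HpMichela}$_2$, $X\ge CA(t)-1$, and since $t\ge 1$ also $X\ge\kappa$. I split into two cases.
\begin{itemize}
\item If $CA(t)\ge2$, then $X\ge \frac C2 A(t)$.
\item Otherwise $A(t)<2/C$, so $X\ge\kappa\ge\frac{\kappa C}{2}A(t)$.
\end{itemize}
In either case $X\ge c_0A(t)$ with $c_0=\frac C2\min\{1,\kappa\}$. Therefore
\[
1+\int_0^t(1+s)^\gamma h\,ds\ \ge\ 1+\frac{c_0}{2}\,\frac{(1+t)^{\gamma}A(t)}{\gamma+1}\ \ge\ \min\Big\{1,\frac{c_0}2\Big\}\Big[1+\frac{(1+t)^{\gamma+4/\delta}g_2(1+t)^{1/\delta}}{(\gamma+1)^2}\Big],
\]
where the last step uses $\gamma+1\ge1$.

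\textbf{Conclusion.} Taking $\bar C$ to be the smaller of the constants obtained in the two steps gives \eqref{tesipropStefania} for all $\gamma\ge0$ and $t\ge0$.
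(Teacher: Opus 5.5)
Your proof is correct and follows essentially the paper's route: Lemma~\ref{lemma_Marcellini_Advances} handles $t\le 1$ uniformly in $\gamma$, and Chebyshev's inequality (Lemma~\ref{lemmaMarcellini}) combined with the hypothesis handles $t\ge 1$. The only difference is a streamlining: you pair $(1+s)^\gamma$ with the increasing function $s\sqrt{g_1(1+s)}$ and apply Lemma~\ref{lemma_Marcellini_Advances} with exponent $\gamma+2$, which removes the paper's separate cases $\gamma\le 1$ and $\gamma>1$ and its second application of Lemma~\ref{lemmaMarcellini}.
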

\begin{proof} 
To prove this result, we need to distinguish different cases.
\\[2mm]
$\bullet$ \fbox{{\sc case $t \ge 1,$ $0 \le \gamma \le 1$}}\\[2mm] 
Let us denote
\[
\sigma := \int_0^1 \sqrt{(1 + s)^2 g_1(1 + s)} \, ds > 0
\]
Now we have
\begin{eqnarray}
\int_0^t (1 + s) \sqrt{g_1(1 + s)} \, ds & = & \frac{1}{2} \int_0^t (1 + s) \sqrt{g_1(1 + s)} \, ds + \frac{1}{2}\int_0^t (1 + s) \sqrt{g_1(1 + s)} \, ds \nonumber\\
&\ge & \frac{1}{2}\int_0^1 (1 + s) \sqrt{g_1(1 + s)} \, ds + \frac{1}{2} \int_0^t (1 + s) \sqrt{g_1(1 + s)} \, ds \nonumber\\
&=& \frac{\sigma}{2} + \frac{1}{2} \int_0^t (1 + s) \sqrt{g_1(1 + s)} \, ds \nonumber\\
&\ge & C_3\left (1 + \int_0^t (1 + s) \sqrt{g_1(1 + s)} \, ds \right ) \nonumber \\
&\stackrel{\eqref{HpMichela}}{\ge}& C_4 (1 + t)^{4/\delta} g_2(1 + t)^{1/\delta}, \label{casotmaggioredi1}
\end{eqnarray}
with 
\[
C_3 := \min \left \{ \frac{1}{2}, \frac{\sigma}{2}\right \}.
\]
We notice that \eqref{casotmaggioredi1} actually holds for all $t\ge 1$ and it is independent of $\gamma$. This will be useful in the sequel.

At this point we aim to apply  Lemma \ref{lemmaMarcellini},  with the choice $g(s) := (1 + s)^{\gamma-1} s$ (which is increasing for ${\gamma\ge 0}$) and $h(s) := (1 + s) \sqrt{g_1(1 + s)},$ which is increasing due to \eqref{H}$_2$. 
Therefore we are able to deduce
\begin{eqnarray}
&& 1 + \int_0^t (1 + s)^{\gamma} s \, \sqrt{g_1(1 + s)} \,ds \nonumber\\
&=& 1 + \int_0^t (1 + s)^{\gamma-1} s \, (1 + s) \sqrt{g_1(1 + s)} \,ds \nonumber\\
&\stackrel{\textnormal{Lemma \ref{lemmaMarcellini}}}{\ge}& 1 + \frac{1}{t} \int_0^t (1 + s)^{\gamma-1} s \, ds \, \int_0^t (1 + s) \sqrt{g_1(1 + s)} \, ds \label{ideadiStefania} 
\end{eqnarray}
At this point, being $\gamma\le 1$ we can estimate the last term of the previous chain of inequalities as in the sequel
\begin{eqnarray*}
&& 1 + \frac{1}{t} \int_0^t (1 + s)^{\gamma-1} s \, ds \, \int_0^t (1 + s) \sqrt{g_1(1 + s)} \, ds\\
&\stackrel{\gamma \le 1}{\ge}& 1 + \frac{1}{t} (1+t)^{\gamma-1} \frac{t^2}2 \,  \int_0^t (1 + s) \sqrt{g_1(1 + s)} \, ds \\
&{\stackrel{\eqref{casotmaggioredi1}}{\ge}}& 1 + \frac{t}{2} (1+t)^{\gamma - 1}\, C_4 (1 + t)^{4/\delta} g_2(1 + t)^{1/\delta}  \\
&{=}& 1 + \frac{1}{2} (1+t)^{\gamma-1}(1+t-1)\, C_4 (1 + t)^{4/\delta} g_2(1 + t)^{1/\delta}  \\
& {=}& 1 + \frac{1}{2} (1+t)^{\gamma}\left(1-\frac{1}{1+t}\right)\, C_4 (1 + t)^{4/\delta} g_2(1 + t)^{1/\delta}\\
& {\geq}& 1 + \frac{1}{4} (1+t)^{\gamma}\, C_4 (1 + t)^{4/\delta} g_2(1 + t)^{1/\delta},
\end{eqnarray*}
where in the last line we used the fact that, $\forall t \ge 1$
\[
1 - \frac{1}{1 + t} \ge  \, 1 - \frac{1}{2} = \frac{1}{2}.
\]
This ends the discussion of the case $t \ge 1$ and $0 \le \gamma \le 1.$
\\[2mm]
$\bullet$ \fbox{{\sc case $0 \le t < 1$ and $0 \le \gamma \le \, 1$}}\\[2mm] 
In this case we have
\begin{eqnarray*}
Q(t, \gamma) &:=& \frac{\displaystyle 1 + \int_0^t (1 + s)^{\gamma} s \, \sqrt{g_1(1 + s)} \, ds}{\displaystyle 1 + \frac{(1 + t)^{\gamma + 4/\delta}}{(\gamma+1)^2} g_2(1 + t)^{1/\delta}}\\
&\ge&  \frac{1}{1 + 2^{1 + 4/\delta} \max_{t \in [0,1]} g_2(1+t)^{1/\delta}} =: \bar{C}
\end{eqnarray*}
which yields \eqref{tesipropStefania}.
\\[2mm]
$\bullet$ \fbox{{\sc case $t \ge \, 1$ and $\gamma>1$}}\\[2mm]
Arguing as in \eqref{ideadiStefania} we have
\[
1 + \int_0^t (1 + s)^{\gamma} s \sqrt{g_1(1 + s)} \, ds \ge \,  1 + \frac{1}{t} \int_0^t (1 + s)^{\gamma-1} s \, ds \, \int_0^t (1 + s) \sqrt{g_1(1 + s)} \, ds. 
\]
At this point we aim to apply another time Lemma \ref{lemmaMarcellini}
with the choice $g(s) := (1 + s)^{\gamma-1}$ and $h(s) := s.$ 
Therefore we are able to deduce
\begin{eqnarray*}
&& 1 + \frac{1}{t} \int_0^t (1 + s)^{\gamma-1} s \, ds \, \int_0^t (1 + s) \sqrt{g_1(1 + s)} \, ds \\
&\stackrel{\textnormal{Lemma \ref{lemmaMarcellini}}}{\ge}& 1 + \frac{1}{t^2} \int_0^t (1 + s)^{\gamma-1} \, ds \, \int_0^t s \, ds \, \int_0^t (1 + s) \sqrt{g_1(1 + s)} \, ds \\
&\stackrel{\eqref{casotmaggioredi1}}{\ge}& 1 + \left [\frac{1}{2} \int_0^t (1 + s)^{\gamma-1} \, ds \right ] \, C_4 (1 + t)^{4/\delta} g_2(1 + t)^{1/\delta}  \\
&=& 1 + \frac{1}{2} \left[\frac{(1 + s)^{\gamma}}{\gamma} \right]^t_0 \, C_4 (1 + t)^{4/\delta} g_2(1 + t)^{1/\delta}  \\
&=& 1 + \frac{1}{2\gamma} \left [(1 + t)^{\gamma} - 1 \right] \, C_4 (1 + t)^{4/\delta} g_2(1 + t)^{1/\delta}  \\
&=& 1 + \frac{1}{2} \frac{(1 + t)^{\gamma}}{\gamma} \left [1 - \frac{1}{(1 + t)^{\gamma}} \right] \, C_4 (1 + t)^{4/\delta} g_2(1 + t)^{1/\delta}  \\
&\ge& 1 + \frac{C_4}{4} \frac{(1 + t)^{\gamma}}{\gamma} (1 + t)^{4/\delta} g_2(1 + t)^{1/\delta},
\end{eqnarray*}
where in the last line we used the fact that, $\forall \gamma\ge 1, \forall t \ge 1$
\[
1 - \frac{1}{(1 + t)^{\gamma}} \ge \, 1 - \frac{1}{2^{\gamma}} \ge \, 1 - \frac{1}{2} = \frac{1}{2}.
\]
\\
$\bullet$ \fbox{{\sc case $0 \le t < 1$ and $\gamma > \, 1$}}\\[2mm]
We observe that, by \eqref{HpMichela}$_1$, the inequality $(1+s) \sqrt{g_1(1 + s)} \ge \, g_1(1)$ holds $\forall s \in [0, + \infty)$, therefore
\begin{eqnarray}
&& 1 + \int_0^t (1 + s)^{\gamma}s \, \sqrt{g_1(1 + s)} \, ds \nonumber\\
&= & 1 + \int_0^t (1 + s)^{\gamma-1} s \, (1 + s) \sqrt{g_1(1 + s)} \, ds \\
&\ge & 1 + \sqrt{g_1(1)}\int_0^t (1 + s)^{\gamma-1} s \, ds \nonumber\\
&= & \frac 12+\frac 12 + \sqrt{g_1(1)}\int_0^t (1 + s)^{\gamma-1} s \, ds \nonumber\\
&\ge & \frac 12+m\left(1 + \int_0^t (1 + s)^{\gamma-1} s \, ds \right) \nonumber\\
&\stackrel{\textnormal{Lemma \ref{lemma_Marcellini_Advances}}}{\ge}& \frac{1}{2} + \frac{m}{C'}\frac{1}{(\gamma+1)^2} (1+t)^{(\gamma+1)} \label{t<1gamma>1}
\end{eqnarray}
where we set
\[
m := \min \{\sqrt{g_1(1)}, 1/2\}.
\]
On the other hand, by setting
\[
M := \max_{t \in [0,1]} \left( (1 + t)^{4/\delta - 1} g_2(1 + t)^{1/\delta}\right ) > 0
\]
we have
\begin{eqnarray*}
&& 1 + \int_0^t (1 + s)^{\gamma} s \, \sqrt{g_1(1 + s)} \, ds \\
& \stackrel{\eqref{t<1gamma>1}}{\ge}&  \frac{1}{2} + \frac{m}{C'}\frac{1}{(\gamma+1)^2} (1+t)^{\gamma+1} \frac{M}{M}\\
&=&  \frac{1}{2} + \frac{m}{C' M} \frac{(1 + t)^{\gamma+1}}{(\gamma+1)^2} \,(1 + t)^{4/\delta - 1} g_2(1 + t)^{1/\delta}
\end{eqnarray*}
therefore also in this case \eqref{tesipropStefania} is proved with $\bar{C} := \min \left \{\frac{1}{2}, \frac{m}{C' M} \right \}$ independent of $\gamma.$
\end{proof}

\section{Proof of Theorem \ref{step 1}}

\label{A priori estimates}

The present section is entirely devoted to the proof of the a priori estimate formulated in Theorem \ref{step 1}.
\begin{proof}[Proof of Theorem \protect\ref{step 1}]
Since the local minimizer $u$ is in $W_{\mathrm{loc}}^{1,\infty }(\Omega )$,
it satisfies the following Euler equation: for every open set $\Omega ^{\prime }$
compactly contained in $\Omega $ we have 
\begin{equation*}
\int_{\Omega }\sum_{i=1}^{n}f_{\xi _{i}}(Du)\,\varphi _{x_{i}}\,dx=0\qquad
\forall \varphi \in W_{0}^{1,2}(\Omega ^{\prime }).
\end{equation*}%
Moreover, by the techniques of the difference quotient (see for example \cite%
[Ch.~8, Sect.~8.1]{giusti}), $u\in W_{\mathrm{loc}}^{2,2}(\Omega )$, then
the second variation holds: 
\begin{equation*}
\int_{\Omega }\sum_{i,j=1}^{n}f_{\xi _{i}\xi _{j}}(Du)u_{x_{j}x_{k}}\varphi
_{x_{i}}\,dx=0,\qquad \forall k=1,\dots ,n,\,\,\,\forall \varphi \in
W_{0}^{1,2}(\Omega ^{\prime }).
\end{equation*}%
In the sequel, we denote by $B_{R}$ a generic ball of radius $R$ compactly
contained in $\Omega $ and by $B_{\varrho }$ a ball of radius $\varrho <R$
concentric with $B_{R}$.
\\
For fixed $k=1,\dots ,n$ let $\eta \in \mathcal{C}_{0}^{1}(B_R) $ be equal to 1 in $B_{\rho }$, 
such
that $|D\eta |\leq \,\frac{2}{(R-\rho )},$ and consider $\varphi =\eta
^{2}\,u_{x_{k}}\,\Phi ((|Du|-1)_{+})$ with $\Phi $ non negative, increasing,
locally Lipschitz continuous on $[0,+\infty )$, such that $\Phi (0)=0.$ Here 
$(a)_{+}$ denotes the positive part of $a\in \mathbb{R}$; in the following
we denote for brevity $\Phi ((|Du|-1)_{+})=\Phi (|Du|-1)_{+}.$ Then a.e. in $\Omega $ 
\begin{equation*}
\varphi _{x_{i}}=2\eta \,\eta _{x_{i}}u_{x_{k}}\Phi (|Du|-1)_{+}+\eta
^{2}u_{x_{i}x_{k}}\Phi (|Du|-1)_{+}+\eta ^{2}u_{x_{k}}\Phi ^{\prime
}(|Du|-1)_{+}[(|Du|-1)_{+}]_{x_{i}}.
\end{equation*}%
Proceeding along the lines of \cite{mar96}, we therefore deduce that 
\begin{eqnarray*}
&&\int_{\Omega }2\eta \Phi (|Du|-1)_{+}\sum_{i,j=1}^{n}f_{\xi _{i}\xi
_{j}}(Du)u_{x_{j}x_{k}}\eta _{x_{i}}u_{x_{k}}\,dx \\
&&+\int_{\Omega }\eta ^{2}\Phi (|Du|-1)_{+}\sum_{i,j=1}^{n}f_{\xi _{i}\xi
_{j}}(Du)u_{x_{j}x_{k}}u_{x_{i}x_{k}}\,dx \\
&&+\int_{\Omega }\eta ^{2}\Phi ^{\prime }(|Du|-1)_{+}\sum_{i,j=1}^{n}f_{\xi
_{i}\xi _{j}}(Du)u_{x_{j}x_{k}}u_{x_{k}}[(|Du-1|)_{+}]_{x_{i}}\,dx=0.
\end{eqnarray*}%
We estimate the first integral in the previous equation by using the
Cauchy-Schwarz inequality and the Young inequality, so that 
\begin{equation*}
\begin{split}
& \left\vert \int_{\Omega }2\eta \Phi (|Du|-1)_{+}\sum_{i,j=1}^{n}f_{\xi
_{i}\xi _{j}}(Du)u_{x_{j}x_{k}}\eta _{x_{i}}u_{x_{k}}\,dx\right\vert \\
& \quad \leq \int_{\Omega }2\Phi (|Du|-1)_{+}\left( \eta
^{2}\sum_{i,j=1}^{n}f_{\xi _{i}\xi
_{j}}(Du)u_{x_{i}x_{k}}u_{x_{j}x_{k}}\right) ^{\frac{1}{2}}\left(
\sum_{i,j=1}^{n}f_{\xi _{i}\xi _{j}}(Du)\eta _{x_{i}}u_{x_{k}}\eta
_{x_{j}}u_{x_{k}}\right) ^{\frac{1}{2}}\,dx \\
& \quad \leq \frac{1}{2}\int_{\Omega }\eta ^{2}\Phi
(|Du|-1)_{+} \sum_{i,j=1}^{n}f_{\xi _{i}\xi
_{j}}(Du)u_{x_{i}x_{k}}u_{x_{j}x_{k}}\,dx \\
& \qquad +2\int_{\Omega }\Phi (|Du|-1)_{+}\sum_{i,j=1}^{n}f_{\xi _{i}\xi
_{j}}(Du)\eta _{x_{i}}u_{x_{k}}\eta _{x_{j}}u_{x_{k}}\,dx.
\end{split}%
\end{equation*}%
Therefore we deduce 
\begin{equation}
\label{(*)}
\begin{split}
\frac{1}{2}\int_{\Omega }\eta ^{2}\Phi (|Du|-1)_{+}& \sum_{i,j=1}^{n}f_{\xi
_{i}\xi _{j}}(Du)u_{x_{i}x_{k}}u_{x_{j}x_{k}}\,dx \\
 + & \int_{\Omega }\eta ^{2}\Phi ^{\prime }(|Du|-1)_{+}\sum_{i,j=1}^{n}f_{\xi
_{i}\xi _{j}}(Du)u_{x_jx_k}u_{x_{k}}[(|Du|-1)_{+}]_{x_{i}}\,dx \\
 \leq & \,2\int_{\Omega }\Phi (|Du|-1)_{+}\sum_{i,j=1}^{n}f_{\xi _{i}\xi
_{j}}(Du)\eta _{x_{i}}u_{x_{k}}\eta _{x_{j}}u_{x_{k}}\,dx.
\end{split}%
\end{equation}%
Since
\begin{equation*}
\lbrack (|Du|-1)_{+}]_{x_{j}}=%
\begin{cases}
(|Du|)_{x_{j}}=\frac{1}{|Du|}\sum_{k}u_{x_{j}x_{k}}u_{x_{k}} & \text{ if $%
|Du|>1$,} \\ 
0 & \text{ if $|Du|\leq 1$,}%
\end{cases}%
\end{equation*}%
a.e.~in $\Omega$, $\forall j=1,\dots n$, we obtain 
\begin{equation*}
\sum_{k=1}^{n}\sum_{i,j=1}^{n}f_{\xi _{i}\xi
_{j}}(Du)u_{x_{j}x_{k}}u_{x_{k}}[(|Du|-1)_{+}]_{x_{i}}=|Du|%
\sum_{i,j=1}^{n}f_{\xi _{i}\xi
_{j}}(Du)[(|Du-1|)_{+}]_{x_{j}}[(|Du|-1)_{+}]_{x_{i}}.
\end{equation*}%
Summing up in \eqref{(*)} for $k = 1, \dots, n$ 
\begin{equation*}
\begin{split}
\int_{\Omega }\eta ^{2}& \Phi (|Du|-1)_{+}\sum_{k,i,j=1}^{n}f_{\xi _{i}\xi
_{j}}(Du)u_{x_{j}x_{k}}u_{x_{i}x_{k}}\,dx \\
& +\int_{\Omega }\eta ^{2}|Du|\Phi ^{\prime
}(|Du|-1)_{+}\sum_{i,j=1}^{n}f_{\xi _{i}\xi
_{j}}(Du)[(|Du-1|)_{+}]_{x_{j}}[(|Du|-1)_{+}]_{x_{i}}\,dx \\
& \leq 4\int_{\Omega }\Phi (|Du|-1)_{+}\sum_{k,i,j=1}^{n}f_{\xi _{i}\xi
_{j}}(Du)\eta _{x_{i}}u_{x_{k}}\eta _{x_{j}}u_{x_{k}}\,dx.
\end{split}%
\end{equation*}%
Using the inequality $|D(|Du|-1)_{+}|^{2}\leq \,|D^{2}u|^{2}$ and the
ellipticity condition in \eqref{H}$_{1}$ we obtain 
\begin{equation}
\begin{split}
& \int_{\Omega } \eta ^{2}[\Phi (|Du|-1)_{+}]{
g_{1}(1+(|Du|-1)_{+}))}\,|D(|Du|-1)_{+}|^{2}\,dx\\ 
\leq & \int_{\Omega } \eta ^{2}[\Phi (|Du|-1)_{+} +|Du|\Phi ^{\prime }(|Du|-1)_{+}]{
g_{1}(1+(|Du|-1)_{+}))}\,|D(|Du|-1)_{+})^{2}\,dx \\
 = & \int_{\Omega }\eta ^{2}[\Phi (|Du|-1)_{+}+|Du|\Phi ^{\prime }(|Du|-1)_{+}]%
{g_{1}(|Du|)}\,|D(|Du|-1)_{+}|^{2}\,dx \\
\leq & \,4\,\int_{\Omega }|D\eta |^{2}\,\Phi
(|Du|-1)_{+}\,g_{2}(|Du|)\,|Du|^{2}\,dx \\
= &\,4\,\int_{\Omega }|D\eta |^{2}\,\Phi
(|Du|-1)_{+}\,g_{2}(1+(|Du|-1)_{+}))\,|Du|^{2}\,dx.
\end{split}
\label{(25)Jota}
\end{equation}%
Let us define 
\begin{equation}
G(t)=1+\int_{0}^{t}\sqrt{\Phi (s)\,g_{1}(1+s)}\,ds\qquad \forall t\geq \,0,
\label{defG}
\end{equation}%
where $\Phi (t)=(1+t)^{2\gamma} t^{2}$, $\gamma\geq 0$.
Since  $t\mapsto
t^{2\gamma+2} g_{1}(t)$ is increasing,
\begin{equation*}
\begin{split}
G(t)& =1+\int_{0}^{t}\sqrt{\Phi (s)g_{1}(1+s)}\,ds =1+\int_{0}^{t}\sqrt{(1+s)^{2\gamma}s^2g_{1}(1+s)}\,ds\\
&\leq 1+\int_{0}^{t}\sqrt{(1+s)^{2\gamma+2}g_{1}(1+s)}\,ds\leq 1+\sqrt{(1+t)^{2\gamma+2}g_{1}(1+t)}\int_{0}^{t} 1 \,ds\\
&= 1+\sqrt{(1+t)^{2\gamma+2}t^2g_{1}(1+t)}
\end{split}
\end{equation*}
hence, using the fact that $g_1 \le \, g_2,$ 
\[
[G(t)]^{2}\leq 2 \left[ 1+(1+t)^{2\gamma+2}t^2g_{2}(1+t)\right],
\]
 we deduce
\begin{equation}
\label{stimadiGconildeumeno1+}
[G((|Du|-1)_{+})]^{2}\leq 2 \left[1 +(1+(|Du|-1)_{+})^{2\gamma+2} ((|Du|-1)_{+})^2g_{2}(1+(|Du|-1)_{+})\right].
\end{equation}

On the other hand 
\begin{equation*}
\begin{split}
|D& (\eta (G((|Du|-1)_{+})|^{2} \\
& \leq \,2\,|D\eta |^{2}[G((|Du|-1)_{+})]^{2}+2\eta ^{2}[G^{\prime
}((|Du|-1)_{+})]^{2}\,|D((|Du|-1)_{+})|^{2} \\
&\quad +2\,\eta
^{2}\,(1+(|Du|-1)_{+})^{2\gamma}(|Du|-1)_{+}^2g_{1}(1+(|Du|-1)_{+})\,|D(1+(|Du|-1)_{+})|^{2}.
\end{split}%
\end{equation*}%
Since $(|Du(x)|-1)_{+}=0$ when $|Du(x)|\leq 1$%
, by \eqref{(25)Jota} we get 
\begin{equation}
\begin{split}
& \int_{\Omega }|D(\eta \,G((|Du|-1)_{+}))|^{2}\,dx \\
& \le \, c_1 \int_{\Omega }|D\eta |^{2}\,\left[1+(1+(|Du|-1)_{+})^{2\gamma+2}(|Du|-1)_{+}^2\,g_{2}(1+(|Du|-1)_{+})\right]\,dx.
\end{split}
\label{Sobolev3}
\end{equation}%
At this point, by Proposition \ref{ProposizioneNatale}, we have
\[
G(t) = 1 + \int_0^t (1 + s)^{{\gamma}} s \, \sqrt{g_1(1 + s)} \, ds \ge \, \bar{C} \, \left [1 + \frac{(1 + t)^{{{\gamma} + 2/\delta}}}{(\gamma + 1)^2}[(1+t)^2g_2(1 + t)]^{1/\delta}\right].
\]
Since $(1+t)^2g_2(1+t)\geq (1+t)^2g_1(1+t)\geq g_1(1)>0$ for every $t\geq0$ and $\delta<2^*$, we have that 
\[[(1+t)^2g_2(1+t)]^{1/\delta} \ge \, g_2(1)^{\frac 1\delta -\frac{1}{2^*}} [(1+t)^2g_2(1+t)]^{1/2^*}.\]
Therefore there exists a constant $c_2>0$ such that
\begin{eqnarray*}
G((|Du|-1)_{+}) &=&1+\int_{0}^{(|Du|-1)_{+}}(1+s)^{{\gamma}}s
\sqrt{g_{1}(1+s)}\,ds \\
&\geq & \frac{c_2}{(\gamma + 1)^2}\left[1+g_{2}(1+(|Du|-1)_{+})^{\frac{1}{2^{\ast }}}
(1+(|Du|-1)_{+})^{\gamma + 2/\delta+2/2^*}\right]
\end{eqnarray*}%
By the Sobolev inequality, there exists a constant $c_{S}$ such that 
\begin{equation}
\left\{ \int_{\Omega }[\eta \,G((|Du|-1)_{+})]^{2^{\ast }}\,dx\right\}
^{2/2^{\ast }}\leq \,c_{S}\,\int_{\Omega }|D(\eta (G(|Du|-1)_{+}))|^{2}\,dx
\label{Sobolev1}
\end{equation}%
where $2^{\ast }$ is equal to $\frac{2n}{n-2}$ if $n>2$ or it is an arbitrary number greater than 2 if $n=2$, thus by \eqref{Sobolev3} and \eqref{Sobolev1} we obtain that there exists $c_3>0$ such
that, for all $\gamma \geq 0$, 
\[
\begin{split}
& \left\{ \int_{\Omega }\eta ^{2^{\ast }}(1+(1+(|Du|-1)_{+})^{2^{\ast} ({\gamma} +
2/\delta)+2}\,g_{2}(1+(|Du|-1)_{+}))\,dx\right\} ^{\frac{2%
}{2^{\ast }}} \\
& \qquad \leq \, c_3\,(\gamma + 1)^4 \int_{\Omega }|D\eta |^{2}\,[1+(1+(|Du|-1)_{+})^{2 {(\gamma + 2)}}\,g_{2}(1+(|Du|-1)_{+})]\,dx 
\end{split}
\]
\\
By the property of the cut-off function $\eta$,
we finally obtain
\begin{equation}
\begin{split}
\label{deliriuniversali}
& \left\{ \int_{B_{\rho}} (1+(1+(|Du|-1)_{+})^{2^{\ast} ({\gamma} + 2/\delta)+2}\,g_{2}(1+(|Du|-1)_{+}))\,dx\right\} ^{\frac{2%
}{2^{\ast }}} \\
& \qquad \leq \,4 c_3\,\frac{(\gamma + 1)^4}{(R - \rho)^2}\int_{B_R} \,[1+(1+(|Du|-1)_{+})^{2 {(\gamma + 2)}}\,g_{2}(1+(|Du|-1)_{+})]\,dx, 
\end{split}%
\end{equation}%
for every $\gamma\geq 0$, $0<\rho<R$, $\overline{B}_\rho\subset\overline{B}_R\subset\Omega$.
\\
The iteration
process follows now the arguments contained in \cite{mar96}; for clarity, we recall the main steps here.
\\
Fix $\bar\rho, \bar R$, $0<\bar\rho<\bar R$, such that $\overline{B}_{\bar \rho}\subset\overline{B}_{\bar R}\subset\Omega$ and set $\bar r :=\frac{\bar\rho+\bar R}{2}$.
We define by induction a sequence $\gamma_k$ in the following way
\[
{\gamma_1 = 0}, \qquad 2^* \left ({\gamma_{k}} + \frac{2}{\delta} \right ) + 2 = 2 \,{(\gamma_{k+1} + 2)} \qquad \forall k \in \mathbb{N}, \,\,\, k > 1.
\]
It is not difficult to prove the following representation formula for $\gamma_k$
\begin{equation}
\gamma_k = {
 \frac{2^*2/\delta -2}{2^* - 2} \, \left [ \left (\frac{2^*}{2} \right )^{k-1} - 1  \right ]}, \qquad \forall k \in \mathbb{N}, \,\,\,\forall k \ge \, 1. \label{gammak}   
\end{equation}
We observe that 
$\frac{2^*2}{\delta}-2>0 $ if and only if $
\delta <  2^* $ .
Therefore, for every $k \in \mathbb{N}, k \ge 1$ we obtain
\begin{displaymath}
2(\gamma_{k+1}+2)=2\frac{\frac{2^*2}{\delta}-2}{2^*-2}\left(\frac{2^*}{2}\right)^k-2\frac{\frac{2^*2}{\delta}-2}{2^*-2}+4. 
\end{displaymath}
Moreover
\begin{displaymath}
4-2\frac{\frac{2^*2}{\delta}-2}{2^*-2}\geq 0
\end{displaymath}
since $\delta>2>\frac{2^*}{2^*-1}$. Hence, we conclude that
\begin{equation}
\label{numero}
2(\gamma_{k+1}+2)\geq 2\frac{\frac{2^*2}{\delta}-2}{2^*-2}\left(\frac{2^*}{2}\right)^k, 
\qquad \forall k \in \mathbb{N}, \,\,\, \forall k \ge \, 1.
\end{equation}
For  all $k \in \mathbb{N}$, $k \ge 1$ we rewrite \eqref{deliriuniversali} with $R = r_{k-1}$ and $\rho = r_k,$ where
\[
r_k = \bar\rho + \frac{(\bar{r} - \bar\rho)}{2^k}
\]
and where we put $\gamma$ equal to $\gamma_k.$ 
We thus have  
\begin{equation}
\begin{split}
\label{deliriuniversali2}
& \left\{ \int_{B_{r_k}} (1+(1+(|Du|-1)_{+})^{2 (\gamma_{k+1} + 2) }\,g_{2}(1+(|Du|-1)_{+}))\,dx\right\} ^{\frac{2%
}{2^{\ast }}} \\
& \qquad \leq \,C_k \,\int_{B_{r_{k-1}}} [1+(1+(|Du|-1)_{+})^{2(\gamma_k
+2)}\,g_{2}(1+(|Du|-1)_{+})]\,dx,
\end{split}%
\end{equation}%
where
\[
C_k = c_3 \, \frac{(\gamma_k + 1)^4 \, 2^{2k}}{(\bar r - \bar\rho)^2}.
\]
Now let us fix $i \in \mathbb{N};$ by iterating  \eqref{deliriuniversali2} for $k = 1, \dots, i$ we obtain 
\begin{equation}
\begin{split}
\label{deliriuniversali1}
& \left\{ \int_{B_{r_i}} (1+(1+(|Du|-1)_{+})^{2 (\gamma_{i+1} + 2) }\,g_{2}(1+(|Du|-1)_{+}))\,dx\right\}^{\left (\frac{2%
}{2^{\ast }}\right )^k} \\
& \leq \,C_i^{\left (\frac{2%
}{2^{\ast }}\right )^{k-1}} \, C_{i-1}^{\left (\frac{2}{2^{\ast }}\right )^{k-2}}  \, \dots \, C_{i - k +1} \\
&\qquad \times \int_{B_{r_{i-k}}} [1+(1+(|Du|-1)_{+})^{2 (\gamma_{i-k+1} + 2)}\,g_{2}(1+(|Du|-1)_{+})]\,dx 
\end{split}%
\end{equation}%
For $k = i,$ $r_{i-k} = r_0 = \bar r$, we obtain  
\begin{equation}
\begin{split}
& \left\{ \int_{B_{r_i}}\left[ 1+(1+(|Du|-1)_{+})^{2(\gamma_{i + 1} + 2)}\,g_{2}(1+(|Du|-1)_{+})\right] \,dx\right\} ^{\left (\frac{2}{%
2^{\ast }}\right )^i} \\
& \qquad \leq \,\prod_{k = 1}^{i} C_k^{\left (\frac{2}{2^*} \right )^{k-1}}\,\int_{B_{\bar r}}\left[ 1+(1+(|Du|-1)_+)^{4} g_{2}(1+(|Du|-1)_{+})%
\right] \,dx.
\end{split}
\label{MP1}
\end{equation}%
By \eqref{gammak} it follows
\begin{equation}
\begin{split}
\gamma_k  &  \le \,  \frac{\frac{2^*2}{\delta}-2}{2^*-2}\left (\frac{2^*}{2} \right )^{k-1}
\end{split}
\label{ipassaggio-meglio}
\end{equation}
 and
\[
\prod_{k = 1}^{i} C_k^{\left (\frac{2}{2^*} \right )^{k-1}} \le \, \prod_{k=1}^{\infty} \left [c_3 \frac{(\gamma_k +1)^4 2^{2k}}{(\bar r - \bar \rho)^2} \right ]^{\left ( \frac{2}{2^*}\right )^{k-1}} \le \, \frac{c_4}{(\bar r - \bar \rho)^{\frac{2 \, 2^*}{2^* - 2}}}.
\]
So we obtain
\begin{equation*}
\begin{split}
& \left[ \int_{B_{{{r_i}}}}(1+(|Du|-1)_{+})^{2(
\gamma_{i+1} + 2) }g_{2}(1+(|Du|-1)_{+})\,dx\right] ^{\left( \frac{2}{2^{\ast }}\right)
^{i}} \\
& \qquad \leq \,{\frac{c_4}{(\bar r -\bar \rho)^{\frac{2\,2^{\ast }}{%
2^{\ast }-2}}}\int_{B_{\bar r}}\left[1 + 
(1+(|Du|-1)_{+})^4g_{2}(1+(|Du|-1)_{+})\right] \,dx.}
\end{split}%
\end{equation*}%
Recalling that,  for every $t \ge 1$, $g_2(t) t^2 \ge \, g_1(1) > 0$ 
and \eqref{numero}, we have
\[
\begin{split}
  (1+&(|Du|-1)_{+})^{2\frac{\frac{2^*2}{\delta}-2}{2^*-2}\left(\frac{2^*}{2}\right)^i - 2} \\
\le &
(1+(|Du|-1)_{+})^{2(\gamma_{i+1} + 1) }\\
\le &
\frac 1{g_1(1)}(1+(|Du|-1)_{+})^{2(\gamma_{i+1} + 2) }
g_{2}(1+(|Du|-1)_{+}) .
\end{split}
\]
Finally,  passing to the limit we obtain that there exists a positive constant $c_4=c_4(g_1(1))$ such that
\begin{equation}
\label{gradiente-infinito}
\begin{split}
&\sup \left\{|Du(x)|:\, x \in B_{\bar \rho}\right\}^{2\frac{\frac{2^*2}{\delta}-2}{2^*-2}} =\lim_{i\rightarrow +\infty }\left[ \int_{B_{{\bar \rho}%
}}(1+(|Du|-1)_{+})^{{2\frac{\frac{2^*2}{\delta}-2}{2^*-2}\left(\frac{2^*}{2}\right)^i - 2}} \,dx \right] ^{\left( 
\frac{2}{2^{\ast }}\right) ^{i}} \\
& \le \lim_{i\rightarrow +\infty }\left[ \int_{B_{{\bar \rho}%
}}(1+(|Du|-1)_{+})^{2(
\gamma_{i+1} + 2) }g_{2}(1+(|Du|-1)_{+})\,dx\right] ^{\left( 
\frac{2}{2^{\ast }}\right) ^{i}} \\
& \leq \, \frac{c_5}{(\bar r - \bar \rho)^{\frac{2\,2^{\ast }}{2^{\ast
}-2}}}\int_{B_{\bar r}}(1+(|Du|-1)_{+})^{{4}}g_{2}(1+(|Du|-1)_{+})\,dx.
\end{split}
\end{equation}
In order to conclude the proof, let us consider the function $G:[0,+\infty)\to \R$  defined in \eqref{defG} with $\gamma=0$
\begin{equation*}
    G(t)=1+\int_0^t{s} \sqrt{g_1(1+s)}ds.
\end{equation*}
Putting together \eqref{Sobolev3} and \eqref{Sobolev1} with $\gamma =0$, we get
\begin{equation}
 \label{stima-Sobolev}
  \left[\int_{\Omega}\left|\eta \,G(|Du|-1)_+\right|^{2^*}\,dx\right]^{\frac2{2^*}}\leq \, c_1 \int_\Omega |D\eta|^2 g_2(1+(|Du|-1)_{+})(1 + (|Du|-1)_+)^{{4}} dx.
\end{equation}
Setting $\nu := \frac{2^*}{\delta}$, by \eqref{salamedoca}, we have
\[
[G(t)]^{2^*}=\left[\left(1+\int_0^ts\sqrt{g_1(1+s)\,ds}\right)^{\frac{2^*}\nu}\right]^\nu
\ge \,{\bar{C}}^{\frac{2^*}\nu}\left[(1+t)^4g_2(1+t)\right]^\nu
\]
so, from \eqref{stima-Sobolev}, there exists a constant $c_6>0$ depending on $n$ and $\delta$ such that
\[
\begin{split}
 &\left[\int_\Omega \eta^{2^*}\left[g_2(1+(|Du|-1)_{+})(1 + (|Du|-1)_+)^{{4}}\right]^{\nu} dx\right]^{2/2^*} \\
& \qquad \le c_6\, \int_\Omega |D\eta|^2 g_2(1+(|Du|-1)_{+})(1 + (|Du|-1)_+)^{{4}} dx.    
\end{split}
\]
If we set
\begin{equation}
\label{definizionediV}
    V(x):=(1+(|Du|-1)_{+})^{{4}} g_2(1+(|Du|-1)_{+}),
\end{equation}
then the previous inequality entails
\begin{equation*}
    \left (\int_{B_{ \rho}} V(x)^{\nu} dx\right )^{2/2^*}\le \frac{c_6}{(R-\rho)^2}\int_{B_{R}}V(x)dx
\end{equation*}
 Let $\beta >\frac{2^*}{2}$, then by H\"older inequality of exponents $\beta$ and $\frac{\beta}{\beta - 1}$, we obtain
\begin{eqnarray}
   \left (\int_{B_\rho}V(x)^{\nu}  dx \right )^{2/2^*} &\le & \frac{c_6}{(R-\rho)^2}\int_{B_{R}}V(x)^{\frac{\nu}{\beta}} \, V(x)^{1 - \frac{\nu}{\beta}}dx \nonumber\\
   &\le &
   \frac{c_6}{(R-\rho)^2}\left(\int_{B_R}V(x)^{\nu}  dx\right)^{\frac{1}{\beta}}\left(\int_{B_R}V(x)^{\frac{\beta-\nu}{\beta-1}}  dx\right)^{\frac{\beta-1}{\beta}}. \label{(46)}
\end{eqnarray}
for every $0<\rho<R$,
$\overline{B}_\rho\subset\overline{B}_R\subset\Omega$.
\\
For  every $k \in \mathbb{N},$ we consider \eqref{(46)} with $R =r_{k}$ and $\rho = r_{k-1},$ with
\[
r_k= \bar R - \frac{\bar R - \bar r}{2^k},
\]
namely
\begin{equation}
\label{(46)iterato}
   \left (\int_{B_{r_{k-1}}}V(x)^{\nu}  dx \right )^{2/2^*} \le\frac{c_6 \, 4^k}{(\bar R-\bar r)^2}\left(\int_{B_{r_k}}V(x)^{\nu}  dx\right)^{\frac{1}{\beta}}\left(\int_{B_{r_k}}V(x)^{\frac{\beta-\nu}{\beta-1}}  dx\right)^{\frac{\beta-1}{\beta}}.
\end{equation}
Let us set
\[
A_k = \int_{B_{r_k}} V(x)^{\nu} \, dx \qquad  \qquad B_k = \int_{B_{r_k}} V(x)^{\frac{\beta - \nu}{\beta - 1}} \, dx.
\]
Then \eqref{(46)iterato} reads
\[
A_{k-1} \le \, \left (c_6 \, \frac{4^k}{(\bar R - \bar r)^2} A_k^{\frac{1}{\beta}} B_k^{\frac{\beta - 1}{\beta}} \right )^{2^*/2}.
\]
By induction on $k$ we deduce
\begin{equation}
\label{induzionei}
A_0 \le \, \frac{\tilde{C}_k}{(\bar R - \bar r)^{2 \sum_{i = 1}^{{k}} \frac{1}{\beta^{i - 1}} \left (\frac{2^*}{2} \right )^i}} \, B_k^{\frac{\beta - 1}{\beta} \sum_{i = 1}^k \frac{1}{\beta^{i - 1}} \left (\frac{2^*}{2} \right )^i} \, A_k^{\frac{1}{\beta^{k}} \left (\frac{2^*}{2} \right )^k}
\end{equation}
where
\[
\tilde{C}_k := c_6^{\sum_{i = 1}^k \frac{1}{\beta^{i - 1}} \left (\frac{2^*}{2} \right )^i}  \, 4^{\sum_{i= 1}^{k} \frac{i}{\beta^{i - 1}} \left (\frac{2^*}{2} \right )^i} .
\]
Now we observe that, since $\beta>2^*/2$,
\begin{eqnarray*}
&& \sum_{i = 1}^k \frac{1}{\beta^{i - 1}} \left (\frac{2^*}{2} \right )^i \le \, \frac{2^*}{2}  \sum_{i = 0}^{\infty} \left (\frac{1}{\beta} \frac{2^*}{2} \right )^i  = \frac{2^*}{2}\, \frac{1}{1 - \frac{2^*}{2 \beta}} =  \frac{2^* \beta}{2 \beta - 2^*}\\[3mm] 
&& \sum_{i = 1}^k \frac{i}{\beta^{i - 1}} \left (\frac{2^*}{2} \right )^i \le \, \frac{2^*}{2} \sum_{i = 0}^{\infty} (i + 1) \left (\frac{1}{\beta} \frac{2^*}{2} \right )^i  < + \infty 
\end{eqnarray*}
thus, letting $k \rightarrow + \infty$, we finally deduce
\[
A_0 = \int_{B_{\bar r}} V(x)^{\nu} \, dx \le \,  \frac{\tilde{C}}{(\bar R - \bar r)^{2 \frac{2^* \beta}{2 \beta - 2^*}}} \left ( \int_{B_{\bar R}} V(x)^{\frac{\beta - \nu}{\beta - 1}} \, dx\right )^{\frac{2^* (\beta - 1)}{2 \beta - 2^*}}
\]
having set
\[
{c_7} = \lim_{k \rightarrow + \infty} \tilde{C}_k.
\]
Let $\beta$ be such that 
\begin{equation}
\label{betanu}
\frac{\beta-\nu}{\beta-1}=\frac{1}{\alpha}
\end{equation}
this is possible since, by \eqref{betanu}
\[
\beta = \frac{\alpha \nu - 1}{\alpha - 1} > \frac{2^*}{2} \quad \Leftrightarrow \quad \alpha < \frac{\delta(2^*-2)}{2^*(\delta - 2)}
\]
which is nothing but \eqref{Hpalphadelta}.
At this point, using \eqref{H}$_4$ in the previous inequality to get
\begin{equation*}
    \int_{B_{\bar r}}V(x)^\nu  dx\le \frac{c_8}{(\bar R - \bar r)^{2 \frac{2^* \beta}{2 \beta - 2^*}}}  \left(\int_{B_{\bar R}} (1+f(Du)) \,  dx\right)^{2^*\frac{\beta-1}{2\beta-2^*}},
\end{equation*}
being $c_8 := c_7 \, C_2$. Using once more H\"older inequality, this time with exponents $\nu$ and $\frac{\nu}{\nu - 1}$ we obtain
\begin{align*}
 & \int_{B_{\bar r}} (1+(|Du|-1)_{+})^{{4}} g_2(1+(|Du|-1)_{+}) \, dx \\
 = & \int_{B_{\bar r}}V(x) \, dx\\
 \le & |B_{\bar r}|^{1-\frac{1}{\nu}}\left(\int_{B_{\bar r}}V(x)^\nu dx\right)^{\frac{1}{\nu}}\\
 \le & \frac{|B_{\bar r}|^{1-\frac{1}{\nu}} c_8^{\frac 1\nu}}{(\bar R - \bar r)^{2 \frac{2^* \beta}{(2 \beta - 2^*) \nu}}} \,\left(\int_{B_{\bar R}} (1+f(Du)) \,    dx\right)^{2^*\frac{\beta-1}{(2\beta-2^*)\nu}}\\
 = & |B_{\bar r}|^{1-\frac{1}{\nu}} c_8^{\frac 1\nu} \,\left(\frac{1}{(\bar R - \bar r)^{\frac{2\beta}{\beta-1}}}\int_{B_{\bar R}} (1+f(Du)) \,    dx\right)^{2^*\frac{\beta-1}{(2\beta-2^*)\nu}} \\
 = & c_9\,\left(\frac{1}{(\bar R - \bar r)^{2\frac{\alpha2^*-\delta}{\alpha(2^*-\delta)}}}\int_{B_{\bar R}} (1+f(Du)) \,    dx\right)^{\frac{\alpha(2^*-\delta)}{22^*\frac{\alpha}{\delta}-2^*\alpha+2^*-2}},
\end{align*}
being 
\[
c_9 := |B_{\bar r}|^{1-\frac{1}{\nu}} c_8^{\frac 1\nu} = |B_{\bar r}|^{\frac{2^* - \delta}{2^*}} c_8^{\frac{\delta}{2^*}}.
\]
To conclude, we observe that \eqref{gradiente-infinito} entails
\[
\begin{split}
&\Vert 1+(|Du|-1)_{+}\Vert _{L^{\infty }(B_{\bar \rho})}\\
& \leq \frac{c_{10}}{(\bar r -\bar\rho)^{\frac{2^*\delta}{2^*2-2\delta}}}
\left (\int_{B_{\bar r}}(1+(|Du|-1)_{+})^{{4}}g_{2}(1+(|Du|-1)_{+})\,dx \right )^{\frac\delta 2\frac{2^*-2}{2^*2-2\delta}}.
\end{split}%
\]
Therefore, for every $\bar\rho, \bar R$,  $0 <\bar \rho < \bar R$, there exists a constant $C>0$ such that 
\begin{align*}
&||Du||_{L^\infty(B_{\bar\rho}\,,\R^n)} \le C \,\left(\frac{1}{(\bar R - \bar\rho)^{\frac{\Theta_1}{\Theta_2}}}\int_{B_{\bar R}} (1+f(Du)) \,    dx\right)^{\Theta_2}.
\end{align*}
with 
\begin{eqnarray*}
\Theta_1 & := & \displaystyle \frac{2^*\delta}{2^*2-2\delta} + 2\frac{\alpha2^*-\delta}{\alpha(2^*-\delta)} \, \frac{\alpha(2^*-\delta)}{22^*\frac{\alpha}{\delta}-2^*\alpha+2^*-2} \, \frac\delta 2\frac{2^*-2}{2^*2-2\delta} \\
& = &\displaystyle \frac{\delta}{2} \,
\frac{2\alpha \, 2^* + \delta \, (2^* - 2)}
{2\alpha \, 2^* - \alpha\delta \, 2^* + \delta \, (2^* - 2)}
\end{eqnarray*}
\[
\Theta_2 = \frac\delta 2 \, \frac{2^*-2}{2^*2-2\delta} \, \frac{\alpha(2^*-\delta)}{22^*\frac{\alpha}{\delta}-2^*\alpha+2^*-2} = \frac{\alpha \delta^2 (2^* - 2)}
{4\left(2\alpha 2^* - \alpha \delta 2^* + \delta(2^* - 2)\right)} 
\]
therefore, referring to the statement of Theorem \ref{step 1}, we set
\begin{eqnarray*}
&& \theta_1 := \frac{\Theta_1}{\Theta_2} = \frac{\delta}{2} \,
\frac{2\alpha \, 2^* + \delta \, (2^* - 2)}
{2\alpha \, 2^* - \alpha\delta \, 2^* + \delta \, (2^* - 2)} \, \frac
{4\left(2\alpha 2^* - \alpha \delta 2^* + \delta(2^* - 2)\right)}{\alpha \delta^2 (2^* - 2)} = \frac{4 \, 2^*}{\delta (2^* - 2)} + \frac{2}{\alpha} 
\end{eqnarray*}
and
\[
\theta_2 := \Theta_2 = \frac{\alpha \delta^2 (2^* - 2)}
{4\left(2\alpha 2^* - \alpha \delta 2^* + \delta(2^* - 2)\right)}.
\]
We observe that 
$\theta_1 > 0;$ moreover 
\begin{eqnarray*}
\theta_2 > 1/2 &\Leftrightarrow & \alpha > \frac{2 \delta (2^* - 2)}{\delta^2 (2^* - 2) + 2^* \, 2 (\delta - 2)} 
\end{eqnarray*}
and we observe that
\begin{eqnarray*}
\frac{2 \delta (2^* - 2)}{\delta^2 (2^* - 2) + 2^* \, 2 (\delta - 2)} < 1 & \Leftrightarrow & (2^* - 2) \delta (2 - \delta) < 2 \, 2^* (\delta - 2)
\end{eqnarray*}
which is obviously true as long as $\delta > 2.$ We remark that these bounds are in agreement with classical results, see for instance \cite{mar96}, where the left hand side is taken with power 2.

\end{proof}



\section{Proof of Theorem \ref{superlinear} }

\label{Section5}

\begin{proof}[Proof of Theorem \protect\ref{superlinear}]

We can argue as in the proof of Theorem 2.1 in \cite{EMMP}. Here we summarize the main steps not only for the sake of completeness but also to underline the fact that we can use our assumption \eqref{ipotesisug2} instead of the more restrictive hypothesis (2.1)$_2$ in \cite{EMMP}.

We start by considering the sequence of  Lagrangians $\{\tilde f_k\}_k$, uniformly approximating $f$, defined at page 1028 of \cite{EMMP}. We notice that, at the already mentioned page, it is underlined that $\tilde f_k$ is $\mathcal{C}^2(\R^n)$, it is uniformly elliptic on compact sets and, moreover, it satisfies a condition of the type
\[
 g_{1}\left( \left\vert \xi \right\vert \right) \left\vert \lambda
\right\vert ^{2}\leq \sum_{i,j=1}^{n}\tilde f_{k_{\xi _{i}\xi_{j}}}\left( \xi \right)
\lambda _{i}\lambda _{j}\leq  2 g_{2}\left( \left\vert \xi \right\vert \right)
\left\vert \lambda \right\vert ^{2}\,,\;\;\;\forall \;\lambda ,\xi \in 
\mathbb{R}^{n},\;\left\vert \xi \right\vert {\geq }t_{0},
\]
provided that $t\mapsto tg_2(t)$ is bounded from below by a strictly positive constant in the set $[t_0,+\infty)$. Our assumption \eqref{ipotesisug2} easily implies that this is true for a suitable $t_0$.

We consider a local minimizer $u$ and a sequence $v_k\in\mathcal{C}^\infty(\Omega')$, $\overline{B}_R\subset\Omega'\Subset\Omega$ of functions obtained mollifying $u$. Thanks to our assumption \eqref{fcoercitiva}, the uniform convergence and the uniform ellipticity from below of $\tilde f_k$ in $\R^n$, it follows that, for any $k\in\N$ there exists a unique minimizer $u_k$ of the functional
\[
\int_{B_R} \tilde f_k(D v)\,dx\qquad v\in v_k+W^{1,1}_0(B_R).
\]

Since the boundary datum $v_k$ belongs to $ \mathcal{C}^{\infty}(B_R)$, then it satisfies the Bounded Slope Condition and  we can apply the semiclassical theory of the Calculus of Variations (see \cite{giusti}, Chapter 1). Observing also that, for this class of functionals the Lavrentiev phenomenon does not occurs, we can conclude that $u_k\in v_k+W^{1,\infty}_0(B_R)$
and we can apply our Theorem \ref{step 1} to obtain an estimate, uniform with respect to $k$, of the Lipschitz constant of $u_k$ on $B_\rho$.

The coercivity assumption \eqref{fcoercitiva} and again the properties of the sequences $\tilde f_k$, $v_k$ allow us to say that the sequence $u_k$ converges to a minimizer of \eqref{energy integral} in $u+W^{1,1}_0(B_R)$, weakly  in  $W^{1,1}(B_R)$  and weakly$^*$ in $W^{1,\infty}(B_r)$, for every $0<r<R$. Hence we conclude that there exists a local minimizer that is locally Lipschitz. Following the same argument used in \cite{EPT25} we obtain also that any local minimizer is locally Lipschitz. 
\end{proof}

\end{document}